\theoremstyle{plain}
\newtheorem{thm}{Theorem}
\newtheorem{prop}{Proposition}
\newtheorem{lem}{Lemma}
\newtheorem{coroll}{Corollary}
\newtheorem{claim}{Claim}
\theoremstyle{definition}
\newtheorem{Def}{Definition}
\theoremstyle{remark}
\newtheorem{rk}{Remark}
\newcommand{\bN}{\mathbb{N}}
\newcommand{\bZ}{\mathbb{Z}}
\newcommand{\bQ}{\mathbb{Q}}
\newcommand{\bR}{\mathbb{R}}
\newcommand{\bT}{\mathbb{T}}
\newcommand{\cA}{\mathcal{A}}
\newcommand{\cP}{\mathcal{P}}
\newcommand{\cR}{\mathcal{R}}
\newcommand{\tlambda}{\tilde{\lambda}}
\newcommand{\tgamma}{\tilde{\gamma}}
\newcommand{\tC}{\tilde{C}}
\newcommand{\abs}[1]{\left|#1\right|}
\newcommand{\norm}[1]{\left \|#1 \right\|}
\newcommand{\innerprod}[2]{\left\langle #1 \: , \: #2 \right\rangle}
\newcommand{\TV}[1]{\left \|#1 \right\|_{\mathrm{TV}}}
\newcommand{\supp}{\mathrm{supp} \:}
\newcommand{\Zn}{\bZ / n \bZ}
\newcommand{\GL}{\mathrm{GL}}
\newcommand{\SL}{\mathrm{SL}}
\newcommand{\hmu}{\hat{\mu}}
\newcommand{\AT}{A^{\top}}
\newcommand{\Wseps}{W^{s}_{\epsilon}}
\newcommand{\Wueps}{W^{u}_{\epsilon}}
\newcommand{\ec}{\epsilon_{c}}
\title{Accelerating Abelian Random Walks with Hyperbolic Dynamics}
\author[1,2,3]{Bastien Dubail\thanks{Correspondence to be sent to: bastien.dubail@inria.fr}}
\author[1,2]{Laurent Massoulié}
\date{}
\affil[1]{Département d’informatique de l’ENS, École normale supérieure, CNRS, PSL Research University, Paris, France}
\affil[2]{INRIA, Paris, France}
\affil[3]{Aix-Marseille Université, CNRS, I2M, Marseille, France}
\begin{document}

\maketitle

\begin{abstract}
	Given integers $d \geq 2, n \geq 1$, we consider affine random walks on torii $(\Zn)^{d}$ defined as $X_{t+1} = A X_{t} + B_{t} \mod n$, where $A \in \GL_{d}(\bZ)$ is a invertible matrix with integer entries and $(B_{t})_{t \geq 0}$ is a sequence of iid random increments on $\bZ^{d}$. We show that when $A$ has no eigenvalues of modulus $1$, this random walk mixes in $O(\log n \log \log n)$ steps as $n \rightarrow \infty$, and mixes actually in $O(\log n)$ steps only for almost all $n$. These results are similar to those of \cite{chung1987random} on the so-called Chung-Diaconis-Graham process, which corresponds to the case $d=1$. Our proof is based on the initial arguments of Chung, Diaconis and Graham, and relies extensively on the properties of the dynamical system $x \mapsto A^{\top} x$ on the continuous torus $\bR^{d} / \bZ^{d}$. Having no eigenvalue of modulus one makes this dynamical system a hyperbolic toral automorphism, a typical example of a chaotic system known to have a rich behaviour. As such our proof sheds new light on the speed-up gained by applying a deterministic map to a Markov chain.
\end{abstract}

\section{Introduction, main results}

In \cite{chung1987random}, Chung Diaconis and Graham investigated the behaviour of a Markov chain on $\Zn$ defined by $X_{t+1} = a X_{t} + B_t$, with $(B_t)$ a sequence of iid increments distributed on $\{-1, 0, 1 \}$. They proved for $a = 2$ that after $t= O(\log n \log \log n)$ steps the distribution of $X_t$ is close to uniform, thus showing a dramatic speed-up over the simple random walk, which needs $\Omega(n^2)$ steps.
Recently, attention has been brought back to the potential speed-up obtained by applying deterministic functions to Markov chains \cite{chatterjee2020speeding, he2020markov, he2021mixing, benhamou2021cutoff}. In this work, we study an analog of the Chung-Diaconis-Graham process in the multi-dimensional case. This case has been previously studied \cite{asci2001generating,hildebrand2008generating,asci2009generating,klyachko2020random} however the bounds proved in these papers did not match those of dimension one, or only in specific cases. This work improves these results and aims to provide a new look on the speed-up caused by the doubling map.

\bigskip

Let $d \geq 2$ be an integer, $\mu$ a finitely supported probability measure on $\bZ^{d}$. We write $\GL_{d}(\bZ)$ for the set of invertible matrices with integer entries and integer-valued inverse. A matrix is hyperbolic if it has no eigenvalue of modulus $1$.
Given $n \geq 1$ and $A \in \GL_{d}(\bZ)$ an invertible hyperbolic matrix, consider the discrete-time random walk $(X_t)_{t \geq 0}$ on $(\Zn)^{d} $ defined by
\begin{equation}\label{eq:def_RW}
	X_0 := 0 \qquad X_{t} = A X_{t-1} + B_t \mod n \quad \forall t \geq 1
\end{equation}
where $B_{t}$ are iid random variables on $\bZ^{d}$ with distribution $\mu$. Such random walks have been first considered by Chung, Diaconis and Graham in \cite{chung1987random} in the case $d =1$, $A =2$.

$A$ induces a bijection on the finite set $(\Zn)^{d}$. It is easy to see from this that the uniform measure $U$ on $(\Zn)^{d}$ is invariant for $X_t$, so $X_t$ will converge in law to $U$ provided it is irreducible and aperiodic. Convergence to stationarity of finite Markov chains is classically measured by total variation distance: for any pair of measures $p, q$ on a finite set $X$, one sets
\[ 
 \TV{p - q} := \max_{A \subseteq X} \abs{p(A) - q(A)} = \frac{1}{2} \sum_{x \in X} \abs{p(x) - q(x)}.
\]
The mixing time is then defined as the time needed for a Markov chain to get at distance at most $\epsilon$ to its stationary distribution, for a fixed parameter $\epsilon$.

Let $\supp \mu$ denote the support of $\mu$ and consider $H$ the smallest $A$-invariant subgroup of $\bZ^{d}$ that contains $\supp \mu - \supp \mu := \{ x - y: x ,y \in \supp \mu\}$. 

As a subgroup of $\bZ^{d}$, $H$ is itself isomorphic to $\bZ^{k}$ for some integer $k \leq d$; see for instance \cite[Thm 7.8]{lang2002algebra}. More precisely, there exists a basis of $\bZ^{d}$, that is a family $(u_i)_{i=1}^{d}$ that generates $\bZ^{d}$ as a group, and positive integers $(a_i)_{i=1}^{k}$, such that $H$ is the subgroup generated by $(a_i u_i)_{i=1}^{k}$. The integer $\mathrm{rk} \, H := k$ is called the rank of $H$ and is equal to the dimension of the $\bQ$-subvector space of $\bQ^{d}$ spanned by $H$. Our first result gives an upper bound on the mixing time of the same order as the one given by Chung, Diaconis and Graham in \cite{chung1987random}. It applies to any dimension $d \geq 2$ and improve the results of \cite{asci2009generating,hildebrand2008generating}. Throughout the paper, $\log$ denotes the natural logarithm.

\begin{thm}\label{thm:upperbound}
	Let $d \geq 2$ be an integer, $\mu$ a probability measure on $\bZ^d$ and $A \in \GL_{d}(\bZ)$. Let $(B_t)_{t \geq 1}$ be iid random variables with law $\mu$ and consider the random walk $(X_t)_{t \geq 0}$ on $(\Zn)^{d}$ defined by \eqref{eq:def_RW}.
	
	Let $P^t(0, \cdot)$ denote the distribution of $X_t$ and $U$ the uniform measure on $(\Zn)^{d}$. Let $H$ be the smallest $A$-invariant subgroup of $\bZ^{d}$ that contains $\supp \mu - \supp \mu$. Suppose $H$ is of rank $d$, generated by $(a_i u_i)_{i=1}^{d}$ for some basis $(u_i)_{i=1}^{d}$ of $\bZ^{d}$ and positive integers $(a_i)_{i=1}^{d}$. If $A$ is hyperbolic, there exists a constant $C > 0$ such that, for all $n$ coprime with all the $a_i$, if $t > C \log n \log \log n$ then $\TV{P^{t}(0, \cdot) - U} \rightarrow 0$ as $n \rightarrow \infty$.
\end{thm}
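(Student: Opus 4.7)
The plan is to follow the Fourier-analytic route of Chung, Diaconis and Graham. The classical upper bound lemma gives
\[
4\,\TV{P^{t}(0,\cdot) - U}^{2} \leq \sum_{\xi \in (\Zn)^{d},\, \xi \neq 0} \bigl|\widehat{P^{t}}(\xi)\bigr|^{2},
\]
and iterating \eqref{eq:def_RW} together with the independence of the $B_{s}$ factorises the Fourier transform as
\[
\widehat{P^{t}}(\xi) \;=\; \prod_{s=0}^{t-1}\hmu\!\left(\tfrac{(\AT)^{s}\xi}{n}\right),
\]
where $\hmu(y)=\sum_{b}\mu(b)e^{2\pi i\innerprod{y}{b}}$ is the characteristic function of $\mu$, naturally a function on the continuous torus $\bT^{d}=\bR^{d}/\bZ^{d}$. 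Since $A$ is hyperbolic, the induced map $T_{A}\colon y\mapsto \AT y\bmod \bZ^{d}$ is a hyperbolic toral automorphism, and the problem reduces to bounding $\sum_{\xi\neq 0}\prod_{s<t}|\hmu(T_{A}^{s}(\xi/n))|^{2}$.

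Next, I would analyse $\hmu$ itself. The rank-$d$ assumption on $H$ makes the quadratic form $Q(y)=\sum_{b,b'}\mu(b)\mu(b')\innerprod{y}{b-b'}^{2}$ positive definite, which gives a quadratic decay $|\hmu(y)|^{2}\leq \exp(-c\|y\|_{\bT^{d}}^{2})$ in a neighbourhood of the origin. Globally, $|\hmu(y)|=1$ only on the dual lattice $H^{*}$; a direct computation in the basis $(u_{i})$ shows that, thanks to the coprimality of $n$ with each $a_{i}$, $|\hmu(\xi/n)|$ is bounded uniformly away from $1$ for every non-zero $\xi\in(\Zn)^{d}$. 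Multiplying these estimates along the $T_{A}$-orbit yields
\[
\bigl|\widehat{P^{t}}(\xi)\bigr|^{2}\;\leq\;\exp\!\Bigl(-c\sum_{s=0}^{t-1}\bigl\|T_{A}^{s}(\xi/n)\bigr\|_{\bT^{d}}^{2}\Bigr),
\]
reducing the task to an ergodic one: controlling the time the forward $T_{A}$-orbit of the rational point $\xi/n$ spends near the origin.

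The core of the proof is the hyperbolicity of $T_{A}$. Writing $\bR^{d}=E^{u}\oplus E^{s}$ for the $\AT$-invariant splitting, unstable distances expand by a factor $\lambda>1$ per iteration while stable distances contract. A classical volume estimate yields
\[
\mathrm{Leb}\bigl\{y\in\bT^{d}:T_{A}^{s}y\in B(0,\delta)\ \forall s\leq k\bigr\}\;\leq\; C\,\delta^{d}\lambda^{-k\dim E^{u}},
\]
and a Weyl-type equidistribution argument converts this into a bound on the number of rationals $\xi/n$ whose orbit is trapped near $0$ for time $k$. Partitioning frequencies into dyadic shells $\{2^{-j-1}\leq\|\xi/n\|_{\bT^{d}}<2^{-j}\}$ and running the orbit until the unstable component of $\xi/n$ reaches order one at some escape time $\tau(\xi)$, then applying the quadratic decay over the remaining $t-\tau(\xi)$ iterations, gives a manageable estimate on each shell.

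The hard part will be the contribution of frequencies $\xi/n$ lying close to the stable manifold $W^{s}(0)\subset\bT^{d}$, since their orbits linger near $0$ for a very long time; handling them amounts to estimating the number of rationals of denominator $n$ inside a thin tubular neighbourhood of $W^{s}(0)$. This is precisely where the additional $\log\log n$ factor arises, as it encodes the worst-case escape time of such pathological frequencies. Optimising the balance between this escape budget and the quadratic decay yields $t\geq C\log n\log\log n$, after which summing the resulting geometric series over all dyadic shells gives $\TV{P^{t}(0,\cdot)-U}\to 0$ as $n\to\infty$.
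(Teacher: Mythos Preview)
Your opening moves coincide with the paper's: the upper bound lemma, the factorisation $\widehat{P^{t}}(\xi)=\prod_{s<t}\hmu((\AT)^{s}\xi/n)$, and the observation that the problem reduces to controlling orbits of the hyperbolic toral automorphism $T_{A}$. After that, however, the proposal diverges from the paper and contains a real gap.

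First, a correctable inaccuracy: the set $\{y\in\bT^{d}:|\hmu(y)|=1\}$ is not $\{0\}$ but the finite set $W=\{y:\innerprod{h}{y}\in\bZ\ \forall h\in H\}$, which after the change of basis becomes $\{(k_{i}/a_{i})_{i}\}$. Your quadratic bound $|\hmu(y)|^{2}\le\exp(-c\|y\|_{\bT^{d}}^{2})$ therefore cannot hold globally; the relevant quantity is the distance $d(y,W)$, and orbits may linger near \emph{any} point of $W$, not just the origin. The paper handles this with an explicit description of $W$ and a uniform bound $f(y)\le\gamma<1$ on $\{d(y,W)\ge\eta\}$.

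The serious gap is the summation mechanism. Your proposed route---a Lebesgue volume estimate for the trapped set, then a ``Weyl-type equidistribution argument'' to pass to rational points, then dyadic shells---does not work here. The orbits of $\xi/n$ under $T_{A}$ are \emph{periodic} (since $\AT$ permutes $(\Zn)^{d}$), so no equidistribution is available, and a Lebesgue bound says nothing about how many rationals of denominator $n$ lie in a thin set. Even granting an escape time $\tau(\xi)=O(\log n)$ for each $\xi$ (which is correct, and which the paper proves via expansiveness plus a lower bound $\|v_{u}\|\ge c_{1}n^{-c_{2}}$ on the unstable component of $\xi/n-w$), the naive sum $\sum_{\xi\neq 0}\gamma^{t/\tau(\xi)}\le n^{d}\gamma^{t/(C\log n)}$ only gives $t=O((\log n)^{2})$---precisely the bound of Asci and Hildebrand--McCollum that the paper is improving.

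What the paper does instead is combinatorial, not ergodic. It builds a Markov partition for $T_{A}$ and codes each $\xi/n$ by a symbolic sequence in a subshift $\Omega_{\cA}$. It then proves three facts about the decomposition into blocks of length $k\asymp\log n$: (i) every block of a non-zero $\xi/n$ contains at least one ``good'' letter (a rectangle disjoint from $W$); (ii) distinct $\xi$ have \emph{disjoint} first blocks; (iii) the family of $i$-th blocks is independent of $i$. Point (iii) and an interchange inequality let one replace $\sum_{\xi}\prod_{i}\gamma^{g(B_{i}(\xi))}$ by $\sum_{\xi}\gamma^{r\,g(B_{1}(\xi))}$, and point (ii) then bounds this by a sum over \emph{all} length-$k$ blocks, counted by the number $j$ of good letters. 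Because sequences consisting only of bad letters are determined by their first symbol, this count is at most $\binom{k}{j}(m_{0}m_{1})^{j}$, whence the sum is $\le(1+m_{0}m_{1}\gamma^{r})^{k}-1$, which is $o(1)$ once $r\gtrsim\log k\asymp\log\log n$. This block-counting step, together with the injectivity (ii), is the missing idea in your outline and is exactly what produces $\log n\log\log n$ rather than $(\log n)^{2}$.
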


\begin{rk}\label{rk:hyperbolic}
	The hyperbolicity assumption on $A$ is essential. Theorem 3.11 of \cite{asci2009generating} shows that if $A$ has an eigenvalue which is a root of unity, $\Omega(n^{2})$ steps are necessary.
	The proof of Theorem \ref{thm:upperbound} will use extensively the properties of the deterministic dynamical system $x \mapsto A^{\top} x$ on the continuous torus $\bR^{d} / \bZ^{d}$. Such dynamical systems are commonly refered to as hyperbolic toral automorphisms and are typical examples of dynamical systems exhibiting chaotic properties. Thus Theorem \ref{thm:upperbound} and its proof suggest that the speed-up observed is the consequence of specific features of hyperbolic dynamical systems. 
\end{rk}

\begin{rk}
	On the other hand, the invertiblity assumption on $A$ seems superfluous. We conjecture it suffices to have $\det A \neq 0$ and restrict to $n$ coprime with $\det A$, as it is already the case for dimension $1$ \cite{chung1987random,hildebrand1993, hildebrand1996random}. To prove Theorem \ref{thm:upperbound} we make use of Markov partitions, which apply essentially to invertible maps. Many arguments in this paper, even those about Markov partitions, only use "forward trajectories" and could thus be extended to the non-invertible case. The main issue is thus to show existence of Markov partitions for non invertible maps. 
\end{rk}

\begin{rk}
	As one can guess, the condition given on $H$ and $n$ is necessary to have irreducibility and aperiodicity of the random walk. This will be proved subsequently in Proposition \ref{prop:cns_aper}. Thus Theorem \ref{thm:upperbound} shows that for all $n$, $O(\log n \log \log n)$ steps are sufficient to reach stationarity as long as convergence holds.
\end{rk}

\begin{rk}
	Let us remark also that there is no loss of generality in supposing the walk is started at $0$. Indeed, should the random walk start at $x \in (\Zn)^{d}$, we can always write $X_t = A^{t} x + \tilde{X}_t$ with $\tilde{X}_t$ a random walk started at $0$. Since $A^{t} x$ is deterministic, it does not affect total variation distance.
\end{rk}

Our second result establishes a lower bound on the mixing time, under a condition of finite entropy. Given any measure $p$ on a discrete set $X$, we recall the entropy of $p$ is defined as
\[ 
	H(p) := - \sum_{x \in X} p_x \log p_x.
\]

\begin{thm}\label{thm:lowerbound}
	Let $\mu$ be a measure on $\bZ^{d}$ with finite entropy. Let $(X_t)_{t \geq 0}$ be as in \eqref{eq:def_RW}. Then 
	\begin{equation}
	\TV{P^{t}(0, \cdot) - U} \geq 1- \frac{t H(\mu) + \log 2}{\log n}.
	\end{equation}
\end{thm}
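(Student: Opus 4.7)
The plan is to combine an upper bound on the entropy of $X_t$ with an elementary inequality relating entropy to total variation distance from the uniform distribution.

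First observe that $X_t = \sum_{i=1}^t A^{t-i} B_i \bmod n$ is a deterministic function of the iid variables $(B_1, \ldots, B_t)$. Since entropy does not increase under deterministic maps and is additive for independent variables,
\[
H(P^t(0, \cdot)) = H(X_t) \leq H(B_1, \ldots, B_t) = t H(\mu).
\]
The finite entropy assumption on $\mu$ is precisely what makes this upper bound nontrivial.

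The key technical step is then the following general lemma: for any probability measure $p$ on a finite set of cardinality $N$, with uniform distribution $U$ and $\delta := \TV{p-U} \in [0,1)$,
\[
H(p) \geq (1-\delta) \log N - \log 2.
\]
To prove it, I would decompose $p = (1-\delta)\pi + \delta \rho$, where $\pi := (p \wedge U)/(1-\delta)$ and $\rho := (p - U)_+/\delta$ are probability measures (using the identity $\sum_x \min(p(x), U(x)) = 1 - \TV{p-U}$). By construction $\pi \leq U/(1-\delta)$ pointwise, so $\pi(x) \leq 1/(N(1-\delta))$ for every $x$, which forces $H(\pi) \geq \log(N(1-\delta))$. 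Concavity of entropy then yields $H(p) \geq (1-\delta) H(\pi) \geq (1-\delta)\log N + (1-\delta)\log(1-\delta)$, and the bound $-(1-\delta)\log(1-\delta) \leq 1/e < \log 2$ gives the claim.

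Applying the lemma with $N = n^d$ and combining with $H(X_t) \leq t H(\mu)$ gives $(1-\delta) d \log n \leq tH(\mu) + \log 2$, and since $d \geq 1$ this implies $(1-\delta)\log n \leq tH(\mu) + \log 2$, which rearranges to the stated inequality. The edge cases $\delta = 1$ (degenerate decomposition) and $\log n \leq tH(\mu) + \log 2$ (right-hand side non-positive) make the statement trivial and can be dispatched directly. I do not foresee a substantive obstacle here: the argument is purely information-theoretic and uses no structure of $A$ beyond the fact that $X_t$ is a function of the noise, so hyperbolicity and invertibility play no role.
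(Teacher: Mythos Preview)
Your argument is correct. Both the paper and you first bound $H(X_t)\le tH(\mu)$ (the paper does this via subadditivity of $H(Y_t)$ for the lifted walk on $\bZ^d$, you via the one-line observation that $X_t$ is a deterministic function of $(B_1,\ldots,B_t)$), and then plug into an entropy--total variation inequality of the shape $H(p)\ge (1-\delta)\log N-\log 2$.

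The difference lies in how that last inequality is obtained. The paper invokes the Fannes--Audenaert inequality $|H(p)-H(U)|\le \delta\log(N-1)+H(\delta)$ and bounds $H(\delta)\le\log 2$. You instead give a self-contained argument: write $p=(1-\delta)\pi+\delta\rho$ with $\pi=(p\wedge U)/(1-\delta)$, note $\pi\le U/(1-\delta)$ pointwise so $H(\pi)\ge\log(N(1-\delta))$, and conclude by concavity plus $-(1-\delta)\log(1-\delta)\le 1/e<\log 2$. Your route is more elementary---no external citation needed---while the paper's route imports a sharper general-purpose tool than the application actually requires. Both collapse to exactly the same final bound here, so neither gains anything quantitatively; your version has the advantage of being fully self-contained.
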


The upper bound of Theorem \ref{thm:upperbound} is valid for all $n$, as long as convergence to uniformity holds. As it was already the case for the Chung-Diaconis-Graham process in dimension one, it turns out that for almost all $n$ tending to infinity only $O(\log n)$ steps suffice to reach stationarity. A property $\cP$ is satisfied by almost all integer if the proportion of integers smaller than $n$ that satisfy $\cP$ tends to $1$ as $n \rightarrow \infty$.

\begin{thm}\label{thm:almostall}
	Consider the setting of Theorem \ref{thm:upperbound}. There exist a constant $C$ such that for almost all $n$, if $t > C \log n $ then $\TV{P^{t}(0, \cdot) - U} \rightarrow 0$ as $n \rightarrow \infty$.
\end{thm}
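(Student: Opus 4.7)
The plan is to combine the Fourier-analytic upper bound used in the proof of Theorem \ref{thm:upperbound} with the averaging argument of \cite{chung1987random}. From Plancherel--Cauchy--Schwarz one has
\[
4\,\TV{P^{t}(0,\cdot) - U}^{2} \leq \sum_{\xi \in (\Zn)^{d} \setminus \{0\}} \prod_{s=0}^{t-1} \bigl|\hmu(\AT^{s}\xi/n)\bigr|^{2},
\]
where $\hmu(y) = \sum_{k \in \bZ^{d}} \mu(k) e^{2\pi i \innerprod{y}{k}}$. Because $\mu$ is finitely supported and $H$ has rank $d$, there exist $c > 0$ and a continuous function $\Psi \colon \bR^{d}/\bZ^{d} \to \bR_{+}$ vanishing exactly on the (finite) annihilator of $H$ in $\bR^{d}/\bZ^{d}$, with $|\hmu(y)|^{2} \leq e^{-c\Psi(y)}$; the coprimality assumption on $n$ ensures this annihilator meets $\{\xi/n : \xi \in (\Zn)^{d}\}$ only at $0$. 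The right-hand side is then bounded by
\[
\sum_{\xi \neq 0} \exp\bigl(-c\,\Sigma_{t}(\xi/n)\bigr), \qquad \Sigma_{t}(y) := \sum_{s=0}^{t-1}\Psi(\AT^{s} y).
\]

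I would then run a second-moment argument over $n \in [N, 2N]$. Swapping sums and parametrizing each rational point $y \in (\bQ/\bZ)^{d}$ by its exact denominator $m$ (which must divide $n$, and hence appears for at most $O(N/m)$ values of $n$) yields
\[
\sum_{n=N}^{2N}\TV{P^{t}(0,\cdot)-U}^{2} \;\lesssim\; N\sum_{m=2}^{2N}\frac{1}{m}\sum_{y\in(\bZ/m\bZ)^{d}\setminus\{0\}} \exp\bigl(-c\,\Sigma_{t}(y/m)\bigr).
\]
If for each $m$ the proportion of $y \in (\bZ/m\bZ)^{d}$ with $\Sigma_{t}(y/m) < c_{1}t$ is bounded by $e^{-\alpha t}$ times a polynomial factor in $m$, then taking $t = C\log N$ with $C$ large enough makes the right-hand side $o(N)$. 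Markov's inequality then shows that $\TV{P^{t}(0,\cdot) - U}$ is small for all but an $o(N)$ subset of $n \in [N, 2N]$; a dyadic decomposition combined with Borel--Cantelli upgrades this to the almost-all statement.

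The main obstacle is the dynamical lower bound on $\Sigma_{t}$ for generic rational points. By ergodicity of Lebesgue measure under $\AT$ and $\int \Psi\, d\lambda > 0$, typical orbits obey $\Sigma_{t}(y) \sim t \int \Psi\, d\lambda$, but rational points form a countable invariant subset on which equidistribution is not automatic. I would handle this by reusing the Markov partition of $\bR^{d}/\bZ^{d}$ for $\AT$ already constructed in the proof of Theorem \ref{thm:upperbound}: the number of length-$t$ symbolic itineraries whose trajectories stay close to the zero set of $\Psi$ grows at an exponential rate strictly below the topological entropy of $\AT$, while the rational points of denominator $m$ realizing a given itinerary are polynomially bounded in $m$. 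Combining these estimates gives the required exponential decay of the bad proportion and closes the argument.
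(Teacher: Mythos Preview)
Your high-level plan---Fourier upper bound, average over $n\in[N,2N]$, reparametrize by exact denominator $m$, and analyze orbits via the Markov-partition coding from the proof of Theorem~\ref{thm:upperbound}---is exactly the paper's strategy. The gap is in your proposed execution of the inner estimate.

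The good/bad split does not close. Counting length-$t$ itineraries with fewer than $c_1 t$ letters in $\cR_1$ (using Lemma~\ref{lem:symbolic_W}(i), which makes $\cR_0$-runs rigid) gives at most $O(e^{\beta t})$ such itineraries with $\beta\to 0$ as $c_1\to 0$; and since $t\gtrsim\log m$, each itinerary is realized by $O(1)$ rational points of denominator $m$. So the number of bad $y$ is $O(e^{\beta t})$ \emph{uniformly} in $m$. Bounding each bad term by $1$, the bad part of your double sum is
\[
N\sum_{m\leq 2N}\frac{1}{m}\,O(e^{\beta t})\;\asymp\;N^{1+\beta C}\log N,
\]
which is not $o(N)$ for any $\beta>0$. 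Rewriting this as ``proportion $\leq e^{-\alpha t}\,\mathrm{poly}(m)$'' does not help: for that to hold with a fixed $\alpha>0$ one needs $e^{(\alpha+\beta)t}\lesssim m^{d+p}$, which fails for bounded $m$, and over the range of $m$ where the bound is vacuous the bad sum is already too large. The comparison with topological entropy is a red herring---you do not need bad itineraries to be few relative to all itineraries, you need the \emph{weighted} sum $\sum_y e^{-c\Sigma_t(y/m)}$, after multiplying by $1/m$ and summing over $m$, to be small.

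The paper avoids any dichotomy. It groups reduced denominators $s$ into ranges $Q_l=\{c\tlambda^{l-1}\leq s<c\tlambda^{l}\}$ and, crucially, takes the block length proportional to $l\asymp\log s$ (not to $\log n$ or to $t$). Lemma~\ref{lem:blocks} applies with $s$ in place of $n$, and the interchange inequality~\eqref{eq:interchange} converts $\gamma^{\sum_i g(B_i)}$ into $\gamma^{t'\,g(B_1)}$ with $t'=\lfloor t/l\rfloor$, after which the inner sum is bounded by $(1+m_0 m_1\gamma^{t'})^{l_1}-1$. A two-regime split on $l$ then finishes: for $l\leq k/\log k$ one has $t'\to\infty$ and a first-order Taylor bound suffices; for $l$ close to $k$, $t'$ stays bounded but one chooses the constant in $t$ large enough that $\tlambda^{-1}(1+m_0 m_1\gamma^{t'})^{C}<1$, so the geometric weight $s^{-1}\asymp\tlambda^{-l}$ wins. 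The denominator-adapted block size together with the interchange step is precisely the ingredient your sketch is missing.
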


Affine random walks defined by \eqref{eq:def_RW} allow in particular to study random walks defined by recursions of higher order. For instance the so-called Fibonacci random walk on $\Zn$, $X_{t+1} = X_{t} + X_{t-1} + B_{t+1}$, studied in \cite{chatterjee2020speeding}, can naturally be written as an affine random walk, using the matrix $A= \left( \begin{smallmatrix}
	1 & 1 \\ 1 & 0
\end{smallmatrix}\right)$, which is hyperbolic. Theorems \ref{thm:upperbound} and \ref{thm:almostall} thus imply that when convergence holds, $O(\log n \log \log n)$ or $O(\log n)$ steps are sufficient to reach stationarity. This applies in particular to the case where $B_t$ is uniform in $\{-1, 0 , 1 \}$ and improves the result of \cite{chatterjee2020speeding}.

\begin{rk}
	From Theorem \ref{thm:almostall}, one may inquire if the upper bound of $O(\log n)$ is valid for all $n$. We conjecture it is not. In dimension $d=1$, $A=2$ and $B_t$ uniform in $\{0, \pm 1\}$, \cite{chung1987random} gives specific values of $n_k$, including $n_k = 2^{k} -1$, for which $O(\log n_k \log \log n_k)$ is in fact the right order of magnitude. This phenomenon occurs because for such $n_k$, the map induced by $x \mapsto 2x$ on $\bZ / n \bZ$ has order $O(\log n_k)$. Thus we expect that for certain distributions of $B_t$, $O(\log n \log \log n)$ is indeed the right mixing time whenever the permutation induced by the matrix $A$ has order $O(\log n)$. This could be an interesting question for future research.
\end{rk}

\subsection{Relations with previous work} 

On the Chung-Diaconis-Graham process itself, ie the case $d = 1$, great effort has been made to improve the bounds on the mixing rates, see \cite{hildebrand1996random,hildebrand2006chung,hildebrand2009lower,hildebrand2019lower,neville2011lower}. The latest paper is \cite{eberhard2020mixing}, where Eberhard and Varjú eventually established the cutoff phenomenon (sharp transition to stationarity) at time $c \log n$ for almost all $n$ and some 
explicit constant $c$. 

There has also been interest for generalizations of this process: Hildebrand  considered the case $X_{t} = a_{t} X_{t-1}  + B_{t}$ where a new random multiplier $a_t$ is applied at each step \cite{hildebrand1993,hildebrand2021multiplicatively}, while Asci  \cite{asci2001generating,asci2009generating} and Hildebrand, McCollum \cite{hildebrand2008generating} considered $d$-dimensional generalizations, obtaining upper bounds of order $O(\log n)^{2})$. Asci \cite{asci2009generating} and recently Klyachko \cite{klyachko2020random} managed to prove order $O(\log n \log \log n)$ for specific cases in dimension $2$. Our results improve on these works, giving a $O(\log n \log \log n)$ upper bound for any hyperbolic matrix $A \in \GL_{d}(\bZ)$. One crucial argument in the one-dimensional case was the use of the binary decomposition in order to identify the dynamical system $x \mapsto 2 x$ on $\bR / \bZ$ with a shift, which a priori had no direct extension to the multi-dimensional case, thus accounting for the difference in the order of magnitude. Klyachko \cite{klyachko2020random} uses a similar technique, $\beta$-ary expansions, to get the $\log n \log \log n$ bound. This paper uses the theory of Markov partitions which provides a general framework where dynamical system can be identified to shifts. Both kind of expansions are related.

Another related result is \cite{diaconis1992affine}, where Diaconis and Graham investigated the case of an affine random walk on the hypercube. With our notations this would amount to taking $n =2$ and $d \rightarrow \infty$, which is not a case covered here. 

On the other hand, the effect of applying determistic bijections on the mixing times of Markov chains has been recently reconsidered from a general point of view. In \cite{bordenave2020spectral}, Bordenave, Qiu and Zhang  prove an upper bound on the second eigenvalue of the product of a bistochastic matrix with a uniform permutation. In \cite{chatterjee2020speeding} Diaconis and Chatterjee consider a generic Markov chain on $n$ states, and give a geometric condition on deterministic bijections which yields a mixing time of order $O(\log n)$. This condition is made to increase the expansion of the Markov chain to $\Omega(1)$, a property known to imply mixing in $O( \log n )$. Moreover, it holds with large probability when the bijection is chosen uniformly at random, so almost every bijection actually yields fast mixing. Those results were subsequently improved by Ben Hamou and Peres in \cite{benhamou2021cutoff}, who established the cutoff phenomenon in the case the bijection is picked uniformly at random. 

Although speed-up occurs with almost every bijection, it was noticed however in \cite{chatterjee2020speeding} that the doubling map $x \mapsto 2 x$ does not fall in that category, so expansion is not the reason accounting for the acceleration. Thus a second goal of our paper is to suggest an alternative explanation as to why the doubling map improves the mixing rate. As explained in Remark \ref{rk:hyperbolic}, at the heart of our proof is the behaviour of the hyperbolic dynamical system $x \mapsto A^{\top} x$. Hence the speed-up established here may be specific to such maps and of a different nature than the acceleration proved for almost all bijection.

Finally, let us mention two other examples of explicit deterministic bijections implying speed-ups that have been studied. On the cycle $\bZ / p \bZ$ with $p$ prime, He \cite{he2020markov} proves an almost linear mixing time in $p$ when the bijection considered is a rational function. This bound has been improved in \cite{he2021mixing} in the specific case of the inverse function $f : x \mapsto 1/x$ if $x \neq 0$, $f(0) := 0$, where He, Pham and Xu shows that $O(\log p)$ steps suffice to reach stationarity. 

\subsection{Discussion}

In the two following paragraphs we discuss motivations and related models.

\paragraph{Random matrices:}

One can inquire about applying random matrices instead of deterministic ones, ie consider random walks of the form $X_{t} = A_{t} X_{t-1} + B_{t}$, with for instance $(A_t)_{t \geq 1}$ iid random matrices. For $d =1$, such cases were considered in \cite{hildebrand1993,hildebrand2021multiplicatively}. 

The simplest randomness we can add is by taking $A_t = A^{\epsilon_t}$ where $\epsilon_t$ are iid Bernoulli random variables of parameter $1/2$. This may slow down the convergence to stationarity, for instance with $B_t$ corresponding to increments of the simple random walk. With constant $A$, acceleration comes from applying successive powers $A^t$ to the random increments $B_k$. Now with $A_t = A^{\epsilon_t}$ the exponent of $A$ is to be replaced with the $t$-th step a random walk on $\bZ$. Because of the diffusive behaviour of the random walk on $\bZ$, it is of order at most $O(\sqrt{t})$. As a consequence, the mixing time becomes $O( (\log n)^{2})$. This can be proven with the same argumentation as in \cite{hildebrand2021multiplicatively}, which treats the case $d =1$.

Another case which can be considered is when $A_t$ are iid with distribution $\nu$, such that the subgroup generated by $\supp \nu$ is $\SL_{d}(\bZ)$. In this case, it is possible to prove acceleration to $(O (\log n))$, without restriction on $n$, while removing the constraint that the matrices have no eigenvalues of modulus $1$. The idea is to consider the evolution of the pair $(X_t, S_t)$ with $S_t := A_1 \cdots A_t$. Since the $A_t$ are iid, the process $(X_t, S_t)$ is a convolution random walk, ie a product of iid increments, on the semi-direct product $(\bZ / n\bZ)^{d} \rtimes \mathrm{SL}_{d}(\Zn)$, whose group operation is defined by $(x, A) (y, B) = (x + Ay, AB)$. This random walk can then be obtained as the reduction modulo $n$ of a convolution random walk on the infinite group $\bZ^{d} \rtimes \mathrm{SL}_{d}(\bZ)$. Such groups have been extensively studied for their expansion properties, as they give rise to deterministic constructions of families of expander graphs, including quotients $\SL_{d} (\Zn))$. We refer to the monography of Tao \cite{tao2015expansion} for an introduction to this topic. 

Since expansion is related to the fast-mixing of simple random walks, the same tools can be used to prove fast mixing of affine random walks. Incidentally, such results have already been used in \cite{he2021mixing} to prove speed-up for the inverse mapping on $\bZ / p \bZ$. Let us give a few examples.

Let $S$ be a finite symmetric set of generators of $\bZ^{d} \rtimes \SL_{d}(\bZ)$ and consider the Cayley graph $G_n = (V_n, E_n)$ where $V_n = (\Zn )^{2}$ and $(x,y) \in E_n$ is an edge if and only if there exists $s=(z,A) \in S$ such that $y = sx := Ax + z$. Consider the simple random walk, which at each step is multiplied by a uniform element of $S$. To ensure aperiodicity we can always impose laziness, ie allow the walk to stay put with positive probability. Theorem 2.4.1 in \cite{tao2015expansion} proves that the graphs $G_n$ are expanders, which implies that the (lazy) simple random walk mixes in $O(\log n)$ steps. In particular this applies to the random walk on $(\Zn )^{2}$ which has holding probability $1/2$ and otherwise moves from $(x,y)$ to $(x \pm 1, y), (x, y \pm 1)$, $(x \pm y, y)$ or $(x, y \pm x)$ with probability $1/16$. 

%For $d \geq 3$, we can establish fast mixing as the consequence of Kazhdan's (T) property satisfied by Cayley graphs of $\SL_{d}(\bZ)$.  Kazhdan's (T) property can be thought of as a strong form of expansion, which has the nice advantage that it is closed under quotients. Therefore if an infinite group satisfies Kazhdan's (T) property, then all its finite quotients automatically have uniform expansion. Semi-direct products also preserve Kazhdan's (T) property. Hence combining these observations, under the assumption that the support of $A_t$ generates $\SL_{d}(\bZ)$, we immediately get expansion of order $\Omega(1)$ for Cayley graphs of $(\Zn)^{d} \rtimes \mathrm{SL}_{d}(\Zn)$, which in turn yields mixing of the (lazy) simple random walk in $O(\log n)$ steps.

Let us finally mention the result of \cite{lindenstrauss2016spectral}, which shows that the expansion of a simple random walk on a semi-direct product $(\bZ / p \bZ)^{d} \rtimes \SL_{d}(\bZ / p \bZ)$, $p$ prime, and that of the projection to $\SL_{d}(\bZ / p \bZ)$, are of the same order. Hence in this case the mixing properties of an affine random walk are essentially determined by the product of random matrices.

\paragraph{Interacting random walks:}

There has been another motivation to generalizing the result of \cite{chung1987random} to the high-dimensional case. Suppose one has two Markov chains $(X_t)$, $(Y_t)$ on the same finite state space. Would it be possible to make them "interact" in such a way as to accelerate the convergence to stationarity? A natural example of interaction is group multiplication. Thus one could investigate Markov chains $Z_t = (X_t, Y_t)$ on $G \times G$, with $G$ a finite group, defined for instance such that, with some probability we add to $Z_t$ a random increment, and with remaining probability we make its two coordinates interact, in the sense that either $X_{t+1} := Y_{t} X_{t}$ or $Y_{t+1} := Y_{t} X_{t}$ with some probability. Such models were proposed in \cite{chatterjee2020speeding}: given a finite set $S$, $P$ a transition matrix on $S$ and $f : S^{d} \rightarrow S$ a function such that $f(\cdot, x_2, \ldots, x_d)$ is a bijection for all $x_2, \ldots , x_d$, the authors inquire about the random walk that moves deterministically from a state $(X_1, \ldots X_d)$ to the state $(X_2, \ldots, X_{d-1}, f(X_1, \ldots X_d)) $ and then takes a step of $P$ in the last coordinate. The latter model allows in particular to study Markov chains defined by recursion of higher order, like $X_{t+1} = X_{t} X_{t-1} B_{t+1}$ with $(B_t)$ a sequence of i.i.d. random variables on the group. However at this stage there is no general result on the mixing rates of such Markov chains.

A case at hand is that of Abelian groups. When the group is Abelian, bijections such as $(x,y) \mapsto (y, x+y)$ are group homomorphisms. Moreover every finite Abelian group $G$ is isomorphic to a product of cyclic groups $G \simeq \prod_{i} \bZ / k_i \bZ$, and one can ensure uniqueness of the $k_i$ by imposing that $k_i$ divides $k_{i+1}$ for all $i$; see \cite[Thm 7.7]{lang2002algebra}. Then $G^{d} \simeq \prod_{i} (\bZ / k_i \bZ)^{d}$ and the previous homomorphisms are those obtained by taking a matrix $A \in GL_{d} (\bZ)$ and applying $A$ to each $i$-th coordinate. Thus the projection on an affine random walk on the $i$-th coordinate yields an affine random walk on the torus $(\bZ / k_{i} \bZ)^{d}$, to which we can apply Theorem \ref{thm:upperbound}.

What about non-linear interactions? The proof technique initiated in \cite{chung1987random} which has been central to all works on affine random walks of $(\Zn)^{d}$ relies heavily on linearity. The question therefore remains open. Let us remark though that when the bijection takes the form of random multiplications between coordinates in $G^{d}$, the model looks very similar to the so-called product replacement algorithm. The latter is designed to construct uniform sets of generators of a finite group. Given a $k$-uplet of generators of a group $G$, the algorithm consists at each step, to select two generators at random and multiply one by the other. Bounds of order $\log \abs{G}$ on the mixing time are available when $G$ is Abelian. In the general case this is conjectured to be the correct order, as it would be the consequence of Kazhdan's (T) property for the automorphism groups of free groups; see \cite{lubotzky2001product,pak2014we}. As far as we know this is still an open question.

%One particular case of interacting random walks that seems of interest to us the following: consider a Markov chain $(\sigma_t, \sigma'_t)$ on the group $\mathfrak{S}_{n} \times \mathfrak{S}_{n}$ of pairs of permutations of $n$ elements, where at each step one permutation, say $\sigma$, is multiplied by either an adjacent transposition $(i \ i+1)$ (no interaction) or a transposition $(i \ \sigma'(i))$ which depends on the state of the other permutation (interaction). This Markov chain is the object of the paper by Massoulie and Varloot \cite{massoulie2017rapid}, who use it to construct random regular graphs in an efficient and practical way. With the additional assumption that time is divided into phases, where on each phase one permutation is frozen, they prove the Markov chain mixes in $O(n \, \mathrm{polylog} n )$ steps. This additional assumption however is crucial in the proof. Studying interacting random walks in a general way could hence perhaps suggest a way to circumvent this issue.

\paragraph{Organization of the paper }
	We start by proving the lower bound in Section \ref{section:lower}. Section \ref{section:upper}, which forms the crux of the paper, contains the essential arguments used in the proof of Theorem \ref{thm:upperbound}, including results from hyperbolic dynamics. It is followed by Section \ref{section:irr_aper}, where we establish that the necessity of conditions in \ref{thm:upperbound} for irreducibility and aperiodicity. Then in Section \ref{section:almostall} we prove Theorem \ref{thm:almostall}. Finally, Section \ref{section:markov} provides the definitions and proofs of the results about hyperbolic dynamics that are needed in the other sections.

\section{A Lower Bound for all $n$: proof of Theorem \ref{thm:lowerbound}}\label{section:lower}

Entropic arguments to get lower bounds on mixing times have become standard. The following inequality, due to Fannes and Audenaert \cite{audenaert2007sharp}, was already used in \cite{he2021mixing}. An entropic argument was also used in \cite{eberhard2020mixing} to lower bound the mixing time and prove the cutoff phenomenon.

\begin{prop}[Fannes and Audenaert \cite{audenaert2007sharp}]\label{prop:FA}
	Let $\mu, \nu$ be two probability measures defined on a finite set of size $N \geq 1$. Let $\epsilon := \TV{\mu - \nu}$ denote their total variation distance. Then
	\begin{equation}
		\abs{H(\mu) - H(\nu)} \leq \epsilon \log (N-1) + H(\epsilon)
	\end{equation} 
	where $H(\epsilon) = - \epsilon \log \epsilon - (1 - \epsilon) \log (1 - \epsilon)$ is the entropy of a Bernoulli random variable of parameter $\epsilon$.
\end{prop}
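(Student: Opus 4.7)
The plan is to prove the inequality through a canonical convex decomposition of $\mu$ and $\nu$ that isolates their common "overlap" part, then combine the two standard estimates for the entropy of a mixture (concavity below, grouping above). To reach the sharp constant $\log(N-1)$ the key is to apply the upper bound to $H(\mu)$ and the lower bound to $H(\nu)$ simultaneously, so that the entropy of the common component cancels.

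Concretely, I would set $\alpha_i := \min(\mu_i, \nu_i)$, so that $\sum_i \alpha_i = 1 - \epsilon$ by definition of total variation distance. Assuming $\epsilon > 0$ (otherwise the bound is trivial), introduce the probability measures
\[
\sigma_i := \frac{\alpha_i}{1-\epsilon}, \qquad \mu'_i := \frac{\mu_i - \alpha_i}{\epsilon}, \qquad \nu'_i := \frac{\nu_i - \alpha_i}{\epsilon},
\]
so that $\mu = (1-\epsilon)\sigma + \epsilon \mu'$ and $\nu = (1-\epsilon)\sigma + \epsilon \nu'$. Critically, $\supp \mu'$ and $\supp \nu'$ are disjoint subsets of $\{1, \ldots, N\}$: $\mu'_i$ is nonzero only when $\mu_i > \nu_i$, and $\nu'_i$ only when $\mu_i < \nu_i$. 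Both supports are nonempty because $\sum_i (\mu_i - \nu_i) = 0$, hence $|\supp \mu'| \leq N - 1$.

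Next, I would invoke the two standard estimates for the entropy of a binary convex combination of probability measures $p_1, p_2$ with weights $\lambda, 1-\lambda$:
\[
\lambda H(p_1) + (1-\lambda) H(p_2) \leq H\bigl(\lambda p_1 + (1-\lambda) p_2\bigr) \leq \lambda H(p_1) + (1-\lambda) H(p_2) + H(\lambda),
\]
where $H(\lambda)$ is the binary entropy as in the statement. The left inequality is concavity of $H$; the right is the grouping identity (equivalently, conditioning decreases entropy). Applying the upper estimate to the decomposition of $\mu$ and the lower estimate to that of $\nu$, the $(1-\epsilon) H(\sigma)$ terms cancel, yielding
\[
H(\mu) - H(\nu) \leq \epsilon \bigl(H(\mu') - H(\nu')\bigr) + H(\epsilon) \leq \epsilon \log |\supp \mu'| + H(\epsilon) \leq \epsilon \log(N-1) + H(\epsilon),
\]
after using $H(\nu') \geq 0$ and $H(\mu') \leq \log|\supp \mu'|$. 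Exchanging the roles of $\mu$ and $\nu$ yields the same bound for $H(\nu) - H(\mu)$, hence for the absolute value.

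The proof is quite short and I do not foresee a serious obstacle, but the subtle point, and the reason a more naive approach loses the sharp constant, is that one must pair the grouping inequality (applied to $\mu$) with the concavity inequality (applied to $\nu$) so that the common $(1-\epsilon)H(\sigma)$ term is eliminated. Bounding both entropies by concavity alone, or both by grouping, would leave a residual term of order $\epsilon \log N$ on the wrong side and fail to give the tight $\log(N-1)$.
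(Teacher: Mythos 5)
The paper cites this inequality as a known result of Audenaert and Fannes and does not include a proof, so there is nothing internal to compare against. Your argument is correct and self-contained. The decomposition $\mu = (1-\epsilon)\sigma + \epsilon\mu'$, $\nu = (1-\epsilon)\sigma + \epsilon\nu'$ with $\sigma$ the normalized overlap is sound: $\sum_i \min(\mu_i,\nu_i) = 1 - \epsilon$ follows from $\min(a,b) = \tfrac{1}{2}(a+b-\abs{a-b})$, the supports of $\mu'$ and $\nu'$ are disjoint and both nonempty, so $\abs{\supp \mu'} \leq N-1$, and the pairing of the grouping upper bound on $H(\mu)$ with the concavity lower bound on $H(\nu)$ cancels the $(1-\epsilon)H(\sigma)$ terms exactly as you say. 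Two minor points worth tightening: you should note that the $\epsilon = 1$ endpoint (where $\sigma$ is undefined) is handled directly, since then $\mu$, $\nu$ have disjoint nonempty supports, giving $H(\mu), H(\nu) \leq \log(N-1)$ while $H(\epsilon) = 0$; and the $N = 1$ case is vacuous since it forces $\epsilon = 0$. With those remarks added the proof is complete, and it is in fact the standard classical route to the sharp Fannes--Audenaert constant (the quantum version requires extra work to reduce to the commuting case, which is moot here).
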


\begin{proof}[Proof of Theorem \ref{thm:lowerbound}]
	We can bound the entropy of a Bernoulli random variable by $\log 2$, hence Proposition \ref{prop:FA} shows 
	\[ 
	\TV{P^{t}(0, \cdot) -U} \geq 1 - \frac{H(X_t) + \log 2}{\log n} .
	\]
	On the other hand the entropy of $X_t$ grows at most linearly as $t H(\mu)$. Indeed we can construct $X_t$ as $X_t = Y_t \mod n$, where $Y_t$ is an affine random walk on $\bZ^{d}$ defined by $Y_0 := 0, Y_{t} := A Y_{t-1} + B_{t}$ for all $t \geq 1$. Using the well known fact that entropy is non-increasing under deterministic transforms yields $H(X_t) \leq H(Y_t)$. Using conditional entropies, we can then notice $(H(Y_t))_{t \geq 0}$ forms a subadditive sequence, as
	\begin{align*}
		H(Y_{t+s}) &\leq H(Y_{t+s}, Y_t) = H(Y_t) + H(Y_{t+s} | Y_t) \\
		&= H(Y_t) + H (Y_s)
	\end{align*}
	To get the second line, notice that one can write $Y_{t+s} = A^{s} Y_t + Z_s$ with $Z_s$ a random variable independent from $Y_t$ and distributed like $Y_s$, so that $H(Y_{t+s} \ | \ Y_t) = H(Y_s)$.
	As a consequence of subadditivity, $H(Y_t) \leq t H(Y_1) = t H(\mu)$, which proves the result.
\end{proof}

\section{General upper bound: proof of Theorem \ref{thm:upperbound}}\label{section:upper}

\paragraph{Main idea of the proof.} The basic idea is to use Fourier analysis: by the classical upper bound lemma of Diaconis and Shahshahani (see \eqref{eq:upperboundlemma} below), the $\ell^{2}$-distance between the law $P^{t}:=P^{t}(0, \cdot)$ of the random walk and the uniform measure can be expressed in terms of $\widehat{P^{t}}$, the discrete Fourier transform of the law. However, by the linearity of the dynamics, the process $X_{t}$ is a sum of independent random variables, so its Fourier transform factorizes. The factors of $\widehat{P^{t}}$ are the Fourier transform $\hmu$ of $\mu$ composed with successive powers of $A^{\top}$, the transpose of the matrix $A$. The goal will thus be to get the best upper bound on $\abs{\hmu((A^{\top})^s \rho)}$ for all $\rho \in (\Zn)^{d}$, $\rho \neq 0$. Considering a function $f$, independent of $n$, defined on the continuous torus $\bR^{d} / \bZ^{d}$ such that $f(\rho / n) = \abs{\hmu(\rho)}$ for all $\rho \in \bT_{n}$, we will then identify a compact subset $W^{c}$ of $\bR^{d} / \bZ^{d}$ on which $f$ is bounded by a constant $\gamma < 1$. Thus we can bound $\widehat{P^{t}}(\rho)$ by $\gamma^{l}$, where $l$ is the number of steps $s \leq t$ for which $(A^{\top})^{s} \rho \in W^{c}$. For that matter, we will use the theory of hyperbolic toral automorphisms to code such orbits by bi-infinite sequences. By doing so, the map $x \mapsto A^{\top} x$ gets identified with a shift, which allows to bound the number of times the orbits cross $W^{c}$. 

\subsection{Fourier analysis on the torus}

Let $n \geq 1$ be an integer. From now on we write $\bT_n := (\Zn )^{d}$. Let $(X_t)_{t \geq 0}$ be defined as in \eqref{eq:def_RW}, and for all $t \geq 0$ write $P^{t}$ for the law of $X_t$.

%Since the group $\bT_n$ considered here is Abelian, $l^2$ distances can be bounded thanks to discrete Fourier analysis. These arguments have been used in all the results related to the Chung-Diaconis-Graham paper, in \cite{chung1987random,asci2009generating,hildebrand2008generating,eberhard2020mixing}.

Let $\innerprod{\cdot}{\cdot}$ be the usual inner product on $\bR^{d}$.  Given $\rho \in \bT_n$ and a function $f$ on $G$, the Fourier transform of $f$ at $\rho$ is 
\[ 
\hat{f}(\rho) := \sum_{x \in \bT_n} f(x) e^{2i \pi \innerprod{x}{\rho} / n}.
\] 
In this expression $x, \rho$ are implicitly identified with their representative in $[0,n-1]^{d}$. Due to the $2 \pi$-periodicity of the complex exponential, the result is independent of the choice of representatives. Such identifications will be done frequently in the sequel without further explicit mention. 

Bounds on mixing times can be obtained through Fourier transforms thanks to the upper bound lemma of Diaconis and Shahshahani \cite{diaconis1988group}. Recall $U$ denotes the uniform measure on $\bT_n$. In the present setup it yields that for any measure $\nu$ on $\bT_n$:
\begin{equation}\label{eq:upperboundlemma}	
	4 \TV{\nu - U} \leq \sum_{g \neq 0} \abs{\hat{\nu}(g)}^2.
\end{equation}

This bound will be applied to $P^t$. The linearity of the problem yields a nice factorization of the Fourier transform, as shown by the following lemma, initially due to Asci \cite{asci2001generating}:
\begin{lem}\label{lem:fourier_product}
	For all $t \geq 1$, for all $\rho \in \bT_n$, 
	\begin{equation}\label{eq:fourier_general}
		\widehat{P^{t}}(\rho) = \prod_{j=0}^{t-1} \hmu((A^{\top})^j \rho).
	\end{equation}
\end{lem}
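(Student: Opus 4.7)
The plan is to unroll the affine recursion and exploit independence of the increments. Iterating $X_t = AX_{t-1} + B_t \mod n$ from $X_0 = 0$ gives
\begin{equation*}
X_t \equiv \sum_{j=0}^{t-1} A^{j} B_{t-j} \pmod{n},
\end{equation*}
so $X_t$ is the reduction mod $n$ of a sum of $t$ independent $\bZ^d$-valued random variables.

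Writing $\widehat{P^{t}}(\rho) = \bE[e^{2i\pi \langle X_t, \rho\rangle / n}]$ and using that the character $y \mapsto e^{2i\pi \langle y, \rho\rangle / n}$ on $\bZ^d$ is invariant under $n\bZ^d$ (any contribution $n k$ yields $e^{2i\pi \langle k,\rho\rangle} = 1$), one may replace $X_t$ inside the exponential by its integer lift $\sum_j A^{j} B_{t-j}$. Independence of the $B_j$ then factorizes the expectation as
\begin{equation*}
\widehat{P^{t}}(\rho) = \prod_{j=0}^{t-1} \bE\!\left[e^{2i\pi \langle A^{j} B_1, \rho\rangle/n}\right],
\end{equation*}
and the adjoint identity $\langle A^j x, \rho\rangle = \langle x, (A^\top)^j \rho\rangle$ rewrites each factor as $\hmu((A^\top)^j \rho)$, under the convention identifying $\rho \in \bT_n$ with $\rho/n \in \bR^d/\bZ^d$ so that the $\bZ^d$-Fourier transform $\hmu$ can be evaluated there.

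The proof is essentially bookkeeping; the only point needing care is to reconcile the two different Fourier transforms in play, namely the one on the finite group $\bT_n$ appearing on the left-hand side and the transform of the $\bZ^d$-valued measure $\mu$ evaluated at a rational point of the continuous torus appearing on the right. This reconciliation is exactly the $n$-periodicity observation above, and no deeper ingredient is required.
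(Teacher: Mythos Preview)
Your proof is correct and follows essentially the same idea as the paper: both exploit linearity of the recursion and independence of the increments to factorize the Fourier transform, with the only cosmetic difference that the paper proceeds by a one-step recursion $\widehat{P^{t}}(\rho) = \widehat{P^{t-1}}(A^{\top}\rho)\,\hmu(\rho)$ and induction, whereas you unroll $X_t = \sum_{j} A^{j} B_{t-j}$ all at once. Your remark on reconciling the $\bT_n$-transform with $\hmu$ evaluated at $\rho/n$ is apt and matches the paper's implicit convention.
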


\begin{proof}
	Applying the definition of the Fourier transform, for all $t \geq 1$,
	\begin{align*}
		\widehat{P^{t}}(\rho) &= \sum_{ \substack{x \in \bT_n \\ y \in \supp \mu}} P^{t-1}(x) \mu(y) e^{2 i \pi \innerprod{A x+y}{\rho} / n} \\
		&= \sum_{x \in \bT_n} P^{t-1}(x)  e^{2 i \pi \innerprod{x}{A^{\top} \rho} / n} \sum_{y \in \supp \mu} \mu(y) e^{2 i \pi \innerprod{y}{\rho} / n} \\
		&= \widehat{P^{t-1}}(A^{\top} \rho) \hmu(\rho)
	\end{align*}
	which gives the result by induction, since $\widehat{P^{0}} = 1$.
\end{proof}

\subsection{Expansiveness}\label{subsec:expansiveness}

Let $\bT^{d} := \bR^{d} / \bZ^{d}$. $A$ induces a bijective map on $\bT^d$, which is identified with $A$. 

The factorization of the Fourier transform provided by Lemma \ref{lem:fourier_product} motivates the study of the deterministic dynamical system $x \mapsto A^{\top} x$ on $\bT^{d}$. In the next section, we will see that by partitioning $\bT^{d}$ into a finite number of "rectangles", we can represent any point by the sequence of rectangles its trajectory passes through. A key ingredient to make this representation work is the expansiveness property: the orbits of two distinct points must separate from each other at exponential rate. This in turn will ensure that provided rectangles of the partition are small enough, two distinct points cannot have their trajectories going through the exact same set of rectangles, hence representations are well-defined. 
%This is at least the case for $A \in \GL_{d}(\bZ)$. For $A$ non invertible, some points will be identified but expansiveness will still allow to tell apart points of interest. Thus we emphasize that most of the content of this paragraph holds whether $A \in \GL_{d}(\bZ)$ or not, since expansiveness is essentially a property of $A$ seen as an automorphism of $\bR^{d}$. 
Note that from Lemma \ref{lem:fourier_product} the results of the two next sections will be applied later on with $A^{\top}$.

\bigskip

Let us introduce the quotient metric on $\bT^{d}$.
Let $p$ be the natural projection of $\bR^{d}$ onto $\bT^{d}$. Given a norm $\norm{\cdot}$ on $\bR^{d}$, the quotient metric $d$, defined as
\[ 
d(x,y) = \inf \left\{ \norm{\hat{x} - \hat{y}}, \hat{x} \in p^{-1}(x), \hat{y} \in p^{-1}(y) \right\}
\]
makes $\bT^{d}$ a compact metric space.

Since $A$ has no eigenvalue of modulus one, by factorizing the characteristic polynomial of $A$ into a product of irreducible factors and regrouping them, one can write it as the product of two real polynomials $P_u, P_s$, the eigenvalues of which all have modulus strictly above or below $1$ respectively. Then $P_u, P_s$ being coprime, it is a standard result in linear algebra that $\ker P_u P_s (A)$ is the direct sum of $\ker P_u(A)$ and $\ker P_s(A)$. However by Cayley Hamilton theorem $\ker P_u P_s (A) = \bR^{d}$. Thus we obtain the following decomposition which is the very heart of hyperbolic dynamics.

\begin{Def}
	$\bR^{d}$ can be decomposed as the direct sum of two subspaces $E_s$ and $E_u$, invariant by $A$ such that the restriction of $A$ to each of these subspaces has eigenvalues of modulus $\abs{\lambda} < 1$ and $\abs{\lambda} > 1$ respectively. $E_s$, resp. $E_u$ is called the stable subspace, resp. unstable subspace. 
\end{Def}

\begin{lem}
	There exists a norm $\norm{\cdot}$ on $\bR^{d}$ such that for all $x \in \bR^{d}$ decomposing as $x = x_s + x_u$ with $x_s \in E_s$ and $x_u \in E_u$, $\norm{x} = \max(\norm{x_s}, \norm{x_u})$ and $\norm{A| _{E_s}} < 1$,  $\norm{A^{-1}|_{E_u}} < 1$, where the norms considered are the operator norms induced by $\norm{\cdot}$.
\end{lem}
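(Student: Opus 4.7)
The plan is to build adapted (Lyapunov) norms on $E_{s}$ and $E_{u}$ separately, tailored so that $A$ is a strict contraction on $E_{s}$ and $A^{-1}$ is a strict contraction on $E_{u}$, and then glue them together with a maximum. The hyperbolicity assumption enters only through the spectral radius bounds $\rho(A|_{E_{s}}) < 1$ and $\rho(A^{-1}|_{E_{u}}) < 1$, which follow directly from the definition of $E_{s}, E_{u}$.

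For the construction on $E_{s}$, I would fix any background norm $\norm{\cdot}_{0}$ on $\bR^{d}$ and pick a real number $\lambda_{s}$ with $\rho(A|_{E_{s}}) < \lambda_{s} < 1$. Gelfand's formula gives $\norm{A^{k}|_{E_{s}}}_{0}^{1/k} \to \rho(A|_{E_{s}})$, so that $\norm{A^{k}|_{E_{s}}}_{0} \leq C \lambda_{s}^{k}$ for some $C$ and all $k \geq 0$. I would then define, for $x \in E_{s}$,
\[
\norm{x}_{s} \; := \; \sum_{k=0}^{\infty} \lambda_{s}^{-k}\, \norm{A^{k} x}_{0},
\]
which is a convergent series and visibly defines a norm on $E_{s}$. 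A telescoping computation gives $\norm{Ax}_{s} = \lambda_{s}(\norm{x}_{s} - \norm{x}_{0}) \leq \lambda_{s} \norm{x}_{s}$, hence the operator norm of $A|_{E_{s}}$ with respect to $\norm{\cdot}_{s}$ is at most $\lambda_{s} < 1$. The same recipe, applied to $A^{-1}$ on $E_{u}$ with some $\lambda_{u} \in (\rho(A^{-1}|_{E_{u}}), 1)$, produces a norm $\norm{\cdot}_{u}$ on $E_{u}$ such that $\norm{A^{-1}|_{E_{u}}}_{u} \leq \lambda_{u} < 1$.

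To combine them, I would use the direct sum decomposition $\bR^{d} = E_{s} \oplus E_{u}$ guaranteed by the previous definition: every $x \in \bR^{d}$ admits a unique decomposition $x = x_{s} + x_{u}$ with $x_{s} \in E_{s}$, $x_{u} \in E_{u}$, and I set
\[
\norm{x} \; := \; \max\bigl(\norm{x_{s}}_{s},\, \norm{x_{u}}_{u}\bigr).
\]
Positive definiteness and homogeneity are immediate; the triangle inequality follows from the linearity of the projections onto $E_{s}$ and $E_{u}$ together with the elementary bound $\max(a+a', b+b') \leq \max(a,b) + \max(a',b')$ for non-negative reals. By construction, the restrictions of $\norm{\cdot}$ to $E_{s}$ and $E_{u}$ coincide with $\norm{\cdot}_{s}$ and $\norm{\cdot}_{u}$ respectively, which gives the identity $\norm{x} = \max(\norm{x_{s}}, \norm{x_{u}})$ stated in the lemma.

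Finally, I would check that the bounds on the restricted operator norms carry over to this combined norm. Since $A$ preserves the splitting, for $x \in E_{s}$ we have $\norm{Ax} = \norm{Ax}_{s} \leq \lambda_{s} \norm{x}_{s} = \lambda_{s} \norm{x}$, so $\norm{A|_{E_{s}}} \leq \lambda_{s} < 1$, and the same argument applied to $A^{-1}$ on $E_{u}$ yields $\norm{A^{-1}|_{E_{u}}} \leq \lambda_{u} < 1$. No step is really an obstacle: the only thing to be careful about is verifying that the max of the two adapted norms remains a norm (this uses $A$-invariance of the decomposition only indirectly, through the well-definedness of $x_{s}, x_{u}$), and that the operator norm computations relative to the glued norm reduce to the ones on each subspace, which is guaranteed by the fact that $A$ leaves $E_{s}$ and $E_{u}$ invariant.
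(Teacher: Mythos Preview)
Your proof is correct and follows essentially the same route as the paper: build an adapted (Lyapunov) norm on each of $E_s$ and $E_u$ by summing along iterates, then glue the two with a $\max$. The only difference is cosmetic --- the paper uses the \emph{finite} sum $\|x\|' := \max\bigl(\sum_{k=0}^{l-1}\|A^{k}x_s\|,\ \sum_{k=0}^{l-1}\|A^{-k}x_u\|\bigr)$ with $l$ chosen so that $\|A^{l}|_{E_s}\|<1$ and $\|A^{-l}|_{E_u}\|<1$, rather than your infinite weighted series; one small point to tighten in your write-up is that the bound $\|A^{k}|_{E_s}\|_{0}\le C\lambda_s^{k}$ alone only makes the terms of $\sum_k \lambda_s^{-k}\|A^{k}x\|_0$ bounded, so for convergence you should invoke Gelfand with an intermediate rate $\mu\in(\rho(A|_{E_s}),\lambda_s)$.
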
 

\begin{proof}
	Start from any norm $\norm{\cdot}$ on $\bR^{d}$. Since all the eigenvalues of $A| _{E_s}$ have modulus strictly smaller than $1$, there exists an integer $l \geq 1$ such that $\norm{A| _{E_s}^{l}} < 1$. This can be proved with the Jordan's normal form of the matrix. Similarly there exits $l' \geq 1$ such that $\norm{A| _{E_u}^{-l'}} < 1$. Taking the maximum of $l, l'$ we can suppose $l = l'$. Setting
	\[ 
	\norm{x}' := \max \left\{ \sum_{k=0}^{l-1} \norm{A| _{E_s}^{k} x_s}, \sum_{k=0}^{l-1} \norm{A| _{E_u}^{-k} x_u} \right\} 
	\]
	for all $x = x_s + x_u$ yields an adapted norm $\norm{\cdot}'$.
\end{proof}

A norm satisfying the properties of the lemma will be called adapted to $A$. From now on, we consider such an adapted norm $\norm{\cdot}$ and the associated quotient metric $d$. 

Finally let 
\begin{equation} 
	\lambda := \max \left(\norm{A| _{E_s}}, \norm{A^{-1}|_{E_u}} \right) < 1
\end{equation}

	\begin{prop}\label{prop:expansiveness} 
		Let $\epsilon > 0$ small enough so that $p$ is injective on $B\left(0, \epsilon \norm{A}\right)$. Consider $x, y \in \bT^{d}$ and define 
		\begin{equation*}
			K_{+} := \sup \{k \geq 0 \, | \,  \forall l \leq k,\ d(A^{l}x, A^{l} y) < \epsilon  \}.
		\end{equation*}
		where by convention the supremum is taken as $- \infty$ if the corresponding set is empty, that is if $d(x,y) \geq \epsilon$.	
		Otherwise let $v$ be the unique representative of $x-y$ in $B(0,\epsilon)$ and decompose it as $v = v_s + v_u$ with $v_s \in E_s$ and $v_u \in E_u$. Then 
		\begin{equation}
			\norm{v_u} < \lambda^{K_{+}}\epsilon \label{eq:expansiveness_future}
		\end{equation}
	In particular if $d(A^{k}x, A^{k} y) < \epsilon$ for all $k \geq 0$, then $y \in \Wseps(x)$.
	\end{prop}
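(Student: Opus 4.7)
The plan is to prove by induction that for each $0 \leq l \leq K_+$, the point $A^l v$ is the unique representative of $A^l x - A^l y$ inside $B(0, \epsilon)$, and then to exploit the adapted norm together with the contraction of $A^{-1}$ on $E_u$ to extract the bound on $\norm{v_u}$. The key observation driving everything is the slight extra margin in the injectivity hypothesis: $p$ is injective on $B(0, \epsilon \norm{A})$ rather than just $B(0, \epsilon)$, which lets us advance the lift one step at a time without ambiguity.

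For the induction itself, at $l = 0$ the point $v$ is the required lift by definition. Assume now the claim holds at some step $l < K_+$, so $\norm{A^l v} < \epsilon$ and $p(A^l v) = A^l x - A^l y$. Applying $A$ gives $\norm{A^{l+1} v} \leq \norm{A} \cdot \norm{A^l v} < \epsilon \norm{A}$, placing $A^{l+1} v$ in the ball $B(0, \epsilon \norm{A})$ where $p$ is injective. Since $l + 1 \leq K_+$, we have $d(A^{l+1} x, A^{l+1} y) < \epsilon$, so $A^{l+1} x - A^{l+1} y$ admits some representative $w \in B(0, \epsilon)$. Both $w$ and $A^{l+1} v$ lie in $B(0, \epsilon \norm{A})$ and project to the same element of $\bT^d$, so injectivity forces $w = A^{l+1} v$, showing $A^{l+1} v \in B(0, \epsilon)$ and closing the induction.

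In particular, $\norm{A^{K_+} v} < \epsilon$. Using the defining property of the adapted norm, $\norm{A^{K_+} v} = \max \left( \norm{A^{K_+} v_s}, \norm{A^{K_+} v_u} \right)$, so in particular $\norm{A^{K_+} v_u} < \epsilon$. The inequality $\norm{A^{-1}|_{E_u}} \leq \lambda$, iterated $K_+$ times on $E_u$, yields
\[
\norm{v_u} \;=\; \norm{A^{-K_+}\bigl(A^{K_+} v_u\bigr)} \;\leq\; \lambda^{K_+} \norm{A^{K_+} v_u} \;<\; \lambda^{K_+} \epsilon,
\]
which is exactly \eqref{eq:expansiveness_future}. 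For the final ``in particular'' assertion, if $d(A^k x, A^k y) < \epsilon$ for every $k \geq 0$ then $K_+ = +\infty$, and the bound above forces $v_u = 0$. Hence $v \in E_s \cap B(0, \epsilon)$, placing $y$ in the local stable set $\Wseps(x)$.

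The main delicate point is the inductive step: the preimage under $p$ of an orbit difference in $\bT^d$ is a priori not unique, and the whole argument hinges on ensuring that $A^l v$ never escapes the injectivity region $B(0, \epsilon \norm{A})$. This is precisely why the hypothesis on $\epsilon$ is phrased with the extra factor $\norm{A}$ — it provides exactly the buffer needed to transfer a lift of $A^l x - A^l y$ to one of $A^{l+1} x - A^{l+1} y$ without ambiguity.
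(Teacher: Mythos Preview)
Your argument is correct and follows essentially the same route as the paper: an induction showing that $A^{l}v$ stays in $B(0,\epsilon)$ for all $0 \leq l \leq K_+$ by exploiting the injectivity of $p$ on the larger ball $B(0,\epsilon\norm{A})$, followed by the adapted-norm decomposition and the contraction of $A^{-1}$ on $E_u$. Your write-up is in fact more explicit than the paper's, which uses an undefined symbol $\epsilon'$ (evidently meant to be $\epsilon\norm{A}$) in the inductive step.
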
	
	
%	The previous proposition is only concerned with future trajectories and will be used for the general case. On the other hand if $A \in \GL_{d}(\bZ)$, we can apply the previous proposition with $A^{-1}$ in place of $A$ to get a dual statement on the past trajectories. Taken simultaneously with the forward statement, this yields a "two-sided" result for toral automorphisms. The last statement is what is traditionally refered to as the expansiveness property of toral automorphisms.
	
	The previous proposition is only concerned with future trajectories. Applying with $A^{-1}$ in place of $A$, it yields a dual statement on the past trajectories. Taken simultaneously with the forward statement, this yields a "two-sided" result for toral automorphisms. The last statement is what is traditionally refered to as the expansiveness property of toral automorphisms.	
	
	\begin{coroll}\label{coroll:expansiveness}
		Let $\epsilon > 0$ so that $p$ is injective on $B\left(0, \epsilon \max(\norm{A}, \norm{A^{-1}})\right)$. Consider 
		\begin{equation*}
			K := \sup \{k \geq 0 \, | \, \forall l \in [-k,k] \ d(A^{l}x, A^{l} y) < \epsilon  \}.
		\end{equation*}
		Then
		\begin{equation}\label{eq:expansiveness}
			d(x,y) < \epsilon \lambda^{K}.
		\end{equation}
		In particular if $d(A^{k}x, A^{k}y) < \epsilon$ for all $k \in \bZ$ then $x=y$. 
	\end{coroll}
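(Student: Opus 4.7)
The plan is to deduce this two-sided statement by applying Proposition \ref{prop:expansiveness} twice: once as stated for $A$, and once for $A^{-1}$, exploiting the symmetry of the hyperbolic splitting.

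First I would verify that the proposition can indeed be applied to $A^{-1}$ with the same norm and the same $\epsilon$. Observe that $A^{-1}$ is also hyperbolic, with stable and unstable subspaces $E_s(A^{-1}) = E_u(A)$ and $E_u(A^{-1}) = E_s(A)$. The norm $\norm{\cdot}$ remains adapted to $A^{-1}$, because the required inequalities $\norm{(A^{-1})|_{E_s(A^{-1})}} = \norm{A^{-1}|_{E_u}} < 1$ and $\norm{(A^{-1})^{-1}|_{E_u(A^{-1})}} = \norm{A|_{E_s}} < 1$ are exactly those built into the definition of an adapted norm for $A$. Moreover the contraction rate is the same constant $\lambda$, and the hypothesis that $p$ be injective on $B(0, \epsilon \max(\norm{A}, \norm{A^{-1}}))$ covers both the requirement $p$ injective on $B(0, \epsilon \norm{A})$ (for the forward statement applied to $A$) and $p$ injective on $B(0, \epsilon \norm{A^{-1}})$ (for the forward statement applied to $A^{-1}$).

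Next, assuming $K \geq 0$ (otherwise $d(x,y) \geq \epsilon$, contradicting the definition) there is a unique representative $v$ of $x - y$ inside $B(0, \epsilon)$ and it decomposes as $v = v_s + v_u$. Define
\begin{equation*}
K_{+} := \sup \{k \geq 0 \,|\, \forall 0 \leq l \leq k,\ d(A^{l}x, A^{l}y) < \epsilon\}, \qquad K_{-} := \sup \{k \geq 0 \,|\, \forall 0 \leq l \leq k,\ d(A^{-l}x, A^{-l}y) < \epsilon\}.
\end{equation*}
By definition $K = \min(K_{+}, K_{-})$. Applying Proposition \ref{prop:expansiveness} to $A$ yields $\norm{v_u} < \lambda^{K_{+}} \epsilon$. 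Applying the same proposition to $A^{-1}$, whose unstable subspace is $E_s$, yields for the same (unique) representative $v$ that $\norm{v_s} < \lambda^{K_{-}} \epsilon$. Both bounds are therefore at most $\lambda^{K}\epsilon$.

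Finally, since the norm is adapted, $\norm{v} = \max(\norm{v_s}, \norm{v_u}) < \lambda^{K}\epsilon$, so $d(x,y) \leq \norm{v} < \epsilon \lambda^{K}$, which is \eqref{eq:expansiveness}. If the separation condition holds for all $k \in \bZ$, then $K = +\infty$ and the inequality forces $d(x,y) = 0$, i.e.\ $x = y$. The only slightly delicate point is ensuring the forward and backward arguments actually refer to the \emph{same} representative $v$; this is guaranteed by the injectivity of $p$ on the ball of radius $\epsilon \max(\norm{A}, \norm{A^{-1}})$, which is precisely why the hypothesis is stated with the maximum. Apart from this bookkeeping, the corollary is an immediate consequence of the one-sided proposition.
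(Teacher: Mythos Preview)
Your proof is correct and follows exactly the approach the paper indicates: the paper does not give a separate proof of the corollary but simply remarks that applying Proposition~\ref{prop:expansiveness} with $A^{-1}$ in place of $A$ yields the dual bound on $\norm{v_s}$, and combining with the forward bound on $\norm{v_u}$ via the adapted norm gives \eqref{eq:expansiveness}. Your write-up spells out precisely these details, including the verification that the same norm is adapted to $A^{-1}$ with the same constant $\lambda$ and that the hypothesis on $\epsilon$ covers both directions.
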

	The smallest parameter $\epsilon$ satisfying Equation \eqref{eq:expansiveness} is called the expansiveness constant of $A$ and will be denoted $\ec$.
	
	\begin{proof}[Proof of Proposition \ref{prop:expansiveness}]
		Assume without loss of generality that $d(x,y) < \epsilon$ to have $K_{+} \geq 0$. Then for $k = 1$, $\norm{A^k v} < \epsilon'$ so $A^k v$ is the unique representative in $B(0, \epsilon')$ of $A^{k} (x-y) \in \bT^{d}$. Thus if $d(A^{k}x, A^{k}y) < \epsilon$ we deduce that $A^{k}v \in B(0, \epsilon)$. Iterating this argument, we get by induction that $A^{k}v \in B(0, \epsilon)$ for all $0 \leq k \leq K_{+}$. 
		
		Now since the norm is adapted to $A$, $\norm{v} = \max \left(\norm{v_s}, \norm{v_u} \right)$ and thus $A^{K_{+}} v_u$ belongs to $B(0, \epsilon)$. Then
		\[ 
		\norm{v_u} = \norm{A^{-K_{+}} A^{K_{+}} v_u} < \norm{A^{-K_{+}}| _{E_u}} \epsilon = \lambda^{K_{+}} \epsilon.
		\]
	\end{proof}

\subsection{Symbolic Representations}\label{subsec:symbolic}

The main ingredient in \cite{chung1987random} that allows to get a mixing time of $O( \log n \log \log n)$ is the use of the binary decomposition to study the orbits of elements of the form $\rho / n$ under the doubling map $x \mapsto 2x$. The theory of symbolic representations and Markov partitions provides a generalization of the binary decomposition to the multidimensional setting and beyond. By making the dynamical system $x \mapsto Ax$ essentially conjugate to a shift, this will permit the same kind of analysis as for the doubling map. 

%The theory of Markov partitions is mainly concerned with hyperbolic invertible maps. Since the matrix $A$ is not necessarily supposed to have an integer-valued inverse, tweaks are required to treat the general case.

The basic idea is as follows: given a finite partition $\cP =\{ P_1, \ldots, P_m \}$ of $\bT^{d}$, one may try to code an element $x \in \bT^{d}$ by the subsets of $\cP$ containing the orbit of $x$. For instance if for all $k \in \bZ$, $A^k x \in P_{x_k}$, then one may try to identify $x$ with the sequence $(x_k)_{k \in \bZ}$. 
%If $A \notin \GL_{d}(\bZ)$, we consider one-sided sequences $(x_k)_{k \geq 0}$ only.

There is a main obstacle to this: without restriction on $\cP$, two distinct elements may be coded by the same sequence. In the case of invertible hyperbolic automorphisms on the torus, it is possible to design well-suited families, called Markov partitions, to avoid this issue.

\bigskip

A Markov partition is not properly a partition: its sets, called rectangles, are closed sets with disjoint interiors but may have intersecting boundaries. Rectangles have additional properties but those are not necessary for the proofs of Theorems \ref{thm:upperbound} and \ref{thm:almostall}, so the definition is postponed to Section \ref{section:markov}. The diameter of a Markov partition is the maximal diameter of its rectangles. 

Given a Markov partition $\cR=\{ R_1, \ldots, R_m \}$, define the adjacency matrix $\cA = \cA(\cR)$ as the $m \times m$ matrix with entries
\begin{equation}\label{eq:adjacency_matrix}
\cA_{ij} = \left\{ \begin{array}{l l}
	1 & \text{if $\mathrm{int}(R_i) \cap A^{-1} \left(\mathrm{int}(R_j)\right) \neq \emptyset$} \\
	0 & \text{otherwise.}
\end{array} \right.
\end{equation}

Given an $m \times m$ adjacency matrix $\cA$, define 
\begin{equation}\label{eq:admissible_sequences}
\Omega_{\cA} := \left\{ \omega \in [m]^{\bZ}: \quad \forall k \in \bZ, \ \cA_{\omega_k, \omega_{k+1}} = 1  \right\}.
\end{equation}
Let $\theta : (\omega_k)_{k \in \bZ} \mapsto (\omega_{k+1})_{k \in \bZ}$ be the shift operator on $\Omega_{\cA}$. $[m]$ denotes the set of integers from $1$ to $m$ and $\Omega_{A}$ is given the product topology. 
%It can be defined by a the metric 
%\[ 
%	d(\omega, \omega') = 2^{-\sup \{ n \geq 0, \omega_{k}=\omega'_{k} \text{ for all $\abs{k} \leq n$} \} }.
%\] 

The following results are \cite{bowen2008equilibrium}[Thm. 3.18, Thm. 3.12] applied to toral automorphisms, and will be proved in Section \ref{section:markov}. 

\begin{prop}\label{prop:existence_partitions}
	For any hyperbolic matrix $A \in \GL_{d}(\bZ)$, there exist Markov partitions of arbitrarily small diameter.
\end{prop}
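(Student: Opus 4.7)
The plan is to follow the classical Sinai--Bowen construction, first building a single Markov partition and then refining it to obtain arbitrarily small diameter. Throughout, the hyperbolic splitting $\bR^d = E_s \oplus E_u$ from Section \ref{subsec:expansiveness} and the expansiveness of Corollary \ref{coroll:expansiveness} are the essential ingredients.

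For the existence of some Markov partition, I would first exploit the local product structure inherited from the splitting. For $\epsilon < \ec / 2$, each point $x \in \bT^d$ has well-defined local stable and unstable leaves $\Wseps(x)$, $\Wueps(x)$, obtained as projections of small disks in $x+E_s$ and $x+E_u$, and Corollary \ref{coroll:expansiveness} guarantees that when $d(x,y) < \epsilon$ the intersection $\Wseps(x) \cap \Wueps(y)$ consists of a single point, yielding a bracket operation $[x,y]$. A proto-rectangle is then a small closed set stable under this bracket. Starting from a sufficiently fine cover of $\bT^d$ by such proto-rectangles, Bowen's refinement procedure produces a finite family $\cR = \{R_1, \dots, R_m\}$ of rectangles with pairwise disjoint interiors satisfying the Markov property $A(\Wueps(x) \cap R_i) \supset \Wueps(Ax) \cap R_j$ and $A^{-1}(\Wseps(Ax) \cap R_j) \supset \Wseps(x) \cap R_i$ whenever $x \in \mathrm{int}(R_i) \cap A^{-1}(\mathrm{int}(R_j))$.

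To obtain arbitrarily small diameter, given any Markov partition $\cR$ of diameter $\delta \leq \ec$, I would pass to the refinement
\[
\cR_N := \left\{ \bigcap_{k=-N}^{N} A^{-k} R_{i_k} : (i_k)_{k=-N}^{N} \in [m]^{2N+1} \right\} \setminus \{\emptyset\}.
\]
The Markov property of $\cR$ together with the fact that $A$ maps stable (resp. unstable) leaves into stable (resp. unstable) leaves ensures that $\cR_N$ is itself a Markov partition. Moreover, any two points $x,y$ lying in a common element of $\cR_N$ satisfy $d(A^k x, A^k y) < \delta \leq \ec$ for all $|k| \leq N$, so Corollary \ref{coroll:expansiveness} gives $d(x,y) \leq \delta \lambda^N$. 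Hence the diameter of $\cR_N$ tends to $0$ as $N \to \infty$.

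The main obstacle lies in the construction of the initial Markov partition: given an overlapping cover by proto-rectangles, one must simultaneously make their interiors disjoint by cutting along suitable stable/unstable segments, and ensure that the Markov property survives for the resulting pieces. The delicate point is that the boundaries of the new rectangles must themselves lie within finitely many stable/unstable leaves of reference points, which requires a careful iterative splitting procedure in which the hyperbolic contraction rates on $E_s$ and $E_u$ are used to ensure termination. Once this classical construction is in place, both conclusions of the proposition follow, and only the refinement argument above is needed to achieve arbitrary smallness of the diameter.
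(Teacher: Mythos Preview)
Your plan is sound but takes a different route from the paper's proof in two respects.

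First, for the initial construction the paper does \emph{not} start from proto-rectangles and cut along stable/unstable leaves. Instead it uses the shadowing lemma (Proposition~\ref{prop:shadowing}) as the engine: one covers $\bT^{d}$ by small balls $B(x_i,\eta)$, declares $\cA_{ij}=1$ when $Ax_i$ is close to $x_j$, and for each $\omega\in\Omega_{\cA}$ lets $\pi(\omega)$ be the unique point that $\beta$-shadows the pseudo-orbit $(x_{\omega_n})$. The sets $T_i:=\pi(\{\omega:\omega_0=i\})$ are then automatically rectangles satisfying the Markov inclusions, because $\pi$ intertwines the shift with $A$ and respects the bracket. Only a single, explicit subdivision of each $T_j$ into four pieces $T^{1}_{j,k},\ldots,T^{4}_{j,k}$ is needed to make interiors disjoint and rectangles proper. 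This bypasses precisely the ``iterative splitting procedure'' you flag as the main obstacle: shadowing hands you the Markov property for free, and no termination argument is required.

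Second, the paper obtains arbitrarily small diameter directly, since the $T_i$ have diameter at most $2\beta$ and $\beta$ is a free parameter. Your refinement $\cR_N=\bigvee_{|k|\le N}A^{-k}\cR$ together with the expansiveness bound $\delta\lambda^{N}$ is a perfectly valid alternative, and arguably cleaner once a single partition is in hand; but note you should check that the elements of $\cR_N$ remain \emph{proper} rectangles, which is where the Markov property of $\cR$ is really used.

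In short: your outline is correct, but the paper's shadowing-based construction is both shorter and avoids the delicate geometric step you identify as problematic.
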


%\paragraph{The automorphism case}
%We now turn to the case $A \in \GL_{d}(\bZ)$ where codings by symbolic shifts are well defined. The general case is considered at the end of the section.

\begin{prop}\label{prop:symbolic_twosided}
	Let $\cR$ be a Markov partition and $\cA = \cA(\cR)$ be the associated adjacency matrix. Provided the diameter of $\cR$ is small enough, for all $\omega \in \Omega_{\cA}$, the intersection $\bigcap_{k \in \bZ} A^{-k}(R_{\omega_k})$ is reduced to a single point, denoted $\pi(\omega)$. The map $\pi : \Omega_{\cA} \rightarrow \bT^{d}$ is continuous, finite-to-one, surjective and satisfies $\pi \circ \theta = A \circ \pi$.
\end{prop}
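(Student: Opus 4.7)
The plan is to verify each of the four properties of $\pi$ in turn, using expansiveness (Corollary \ref{coroll:expansiveness}) for uniqueness, the semiconjugacy and continuity, and the geometric structure of Markov partitions (to be formally introduced in Section \ref{section:markov}) for non-emptiness, surjectivity and finiteness of the fibers.

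For the uniqueness of the intersection point, I would require that the diameter of $\cR$ be strictly smaller than the expansiveness constant $\ec$. If $x$ and $y$ both lie in $\bigcap_{k \in \bZ} A^{-k}(R_{\omega_k})$, then $A^{k}x, A^{k}y \in R_{\omega_k}$ for all $k$, so $d(A^{k}x, A^{k}y) \leq \mathrm{diam}(\cR) < \ec$ and Corollary \ref{coroll:expansiveness} forces $x=y$. Non-emptiness, on the other hand, is precisely what the Markov property is designed to ensure: admissibility in \eqref{eq:admissible_sequences} corresponds to the statement \eqref{eq:adjacency_matrix} that consecutive rectangles overlap under $A^{-1}$, and the (yet-to-be-defined) Markov property allows this to propagate to every finite window $\bigcap_{k=-N}^{N} A^{-k}(R_{\omega_k})$, which is a non-empty compact subset of $\bT^{d}$. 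The infinite intersection is then non-empty by the finite intersection property for nested compacts.

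The semiconjugacy $\pi \circ \theta = A \circ \pi$ is immediate from the definition, since $A^{k}(A\pi(\omega)) = A^{k+1}\pi(\omega) \in R_{\omega_{k+1}} = R_{(\theta\omega)_k}$ for every $k \in \bZ$. Continuity of $\pi$ is another direct application of expansiveness: if $\omega$ and $\omega'$ agree on $[-N, N]$, then $\pi(\omega)$ and $\pi(\omega')$ both lie in $\bigcap_{|k|\leq N} A^{-k}(R_{\omega_k})$, whose diameter is at most $\ec \lambda^{N} \to 0$ by Corollary \ref{coroll:expansiveness}. For surjectivity, I would first identify an explicit dense subset of $\bT^{d}$ in the image: any point $x$ whose entire orbit $(A^{k}x)_{k \in \bZ}$ avoids $\bigcup_{i} \partial R_{i}$ admits a symbolic code obtained by sending each $k$ to the unique $i$ with $A^{k}x \in \mathrm{int}(R_{i})$, and this code is automatically admissible because $A^{k}x$ then belongs to $\mathrm{int}(R_{\omega_k}) \cap A^{-1}(\mathrm{int}(R_{\omega_{k+1}}))$. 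Such $x$ form a dense $G_{\delta}$ subset of $\bT^{d}$ by the Baire theorem, since each $A^{-k}\left(\bT^{d} \setminus \bigcup_{i}\partial R_{i}\right)$ is open and dense. Combined with the fact that $\pi(\Omega_{\cA})$ is closed (as the continuous image of the compact set $\Omega_{\cA}$), this proves $\pi$ is surjective.

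The step I expect to be the most delicate is the finite-to-one property. The only source of multiple symbolic codes for a given $x$ is that some iterate $A^{k}x$ may sit on the common boundary of several rectangles. A naive bound $|\pi^{-1}(x)| \leq \prod_{k} m_{k}$, with $m_{k}$ the number of rectangles containing $A^{k}x$, is not sufficient, since the orbit of $x$ could in principle meet the boundary infinitely often. Controlling this calls for the finer geometric content of Markov partitions, namely the decomposition $\partial \cR = \partial^{s} \cR \cup \partial^{u} \cR$ into a stable and an unstable piece, invariant under $A$ and $A^{-1}$ respectively, which rigidly constrains how orbits can interact with the boundary. I would defer this argument, together with the precise form of the Markov property used for non-emptiness of finite intersections, to Section \ref{section:markov}.
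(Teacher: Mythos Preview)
Your treatment of uniqueness, non-emptiness, the semiconjugacy, continuity, and surjectivity matches the paper's proof essentially line for line: expansiveness (Corollary~\ref{coroll:expansiveness}) for the singleton property and for continuity, the Markov property together with compactness for non-emptiness of the infinite intersection, and Baire's theorem combined with compactness of $\pi(\Omega_{\cA})$ for surjectivity.

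The one place you diverge is the finite-to-one property. You propose to invoke the decomposition $\partial\cR = \partial^{s}\cR \cup \partial^{u}\cR$ into forward- and backward-invariant boundary pieces, and defer the details. The paper instead gives a short, self-contained combinatorial argument that yields the explicit bound $\lvert\pi^{-1}(x)\rvert \leq m^{2}$ and does not rely on the boundary structure at all. The idea is pigeonhole: given $m^{2}+1$ distinct codings of $x$, choose a window $[k_{1},k_{2}]$ on which they are pairwise distinguishable; since there are only $m^{2}$ possible endpoint pairs $(\omega_{k_{1}},\omega_{k_{2}})$, two of the codings, say $\omega$ and $\omega'$, agree at $k_{1}$ and $k_{2}$ but differ strictly in between. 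The Markov property then produces an interior point $u \in \bigcap_{k=k_{1}}^{k_{2}} A^{-k}(\mathrm{int}\,R_{\omega_{k}})$ with a coding $\tau$, and splicing the $\omega'$-block into $\tau$ on $[k_{1},k_{2}]$ yields a second admissible sequence $\tau'$ whose image $v$ stays within $2\,\mathrm{diam}(\cR)$ of $u$ along its entire orbit (outside the window they share rectangles; inside, both are within one diameter of $A^{k}x$). Expansiveness forces $u=v$, contradicting that $A^{l}u$ lies in the interior of $R_{\omega_{l}}$ while $A^{l}v \in R_{\omega'_{l}}$ for some $l$ with $\omega_{l}\neq\omega'_{l}$. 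This route is more elementary than the boundary decomposition and gives a quantitative bound; your approach is also standard in the literature but, as you acknowledge, would need more geometric input to complete.
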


\begin{rk}
	In addition to being finite-to-one, the set of points which admit several coding sequences is actually negligible, in the sense that for all ergodic $\theta$-invariant measure with support $\Omega_{\cA}$, the set of points which have two images or more under $\pi$ has null measure; see \cite{benoist2002systemes}[Thm 12.4]. 
\end{rk}

A pair $(\Omega_{\cA},\pi)$, as given in Proposition \ref{prop:symbolic_twosided}, or simply the map $\pi$, will be called a symbolic representation of the dynamical system $x \mapsto Ax$. Similarly for $\xi \in \bT^{d}$, a sequence $\omega \in \pi^{-1}(\xi)$ will be called a symbolic representation of $\xi$. $\pi^{-1}(\xi)$ denotes here the inverse image of the singleton $\{ \xi \}$. We omit braces when considering inverse image of singletons.

We will finally need the two following results about Markov partitions which form Lemma 1 and Corollary 11 of \cite{bowen1970markovminimal}. They are proved in Section \ref{section:markov}.
	\begin{prop}\label{prop:symbolic_periodic}
		\begin{enumerate}[label=(\roman*)]
			\item If $x$ belongs to a rectangle $R \in \cR$, then there exists $\omega \in \Omega$ such that $x = \pi(\omega)$ and $\omega_{0} = R$.
			\item If $\pi(\omega)$ is a periodic point, then $\omega$ is periodic. 
		\end{enumerate}    
	\end{prop}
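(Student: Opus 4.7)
The plan for part (i) is to approximate $x$ by points in the interior of $R$ whose entire orbit avoids the boundary of the partition, read off their forced codings, and extract a limit. Set $\partial \cR := \bigcup_i \partial R_i$ and $D := \bigcup_{k \in \bZ} A^{-k}(\partial \cR)$. Since the interiors of the $R_i$ are pairwise disjoint and cover a dense open subset of $\bT^d$, $\partial \cR$ has empty interior; because $A$ is a homeomorphism, each $A^{-k}(\partial \cR)$ does too, so by the Baire category theorem $D$ has empty interior. Using that a rectangle is the closure of its interior (a structural property of Markov partitions recalled in Section \ref{section:markov}), I would pick a sequence $x_n \in \mathrm{int}(R) \setminus D$ converging to $x$. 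Because $x_n \notin D$, every iterate $A^k x_n$ lies in the interior of exactly one rectangle $R_{\omega_k^{(n)}}$, which uniquely defines a sequence $\omega^{(n)} \in [m]^{\bZ}$ with $\omega_0^{(n)} = R$. Admissibility $\cA_{\omega_k^{(n)}, \omega_{k+1}^{(n)}} = 1$ holds for free because $A^k x_n$ witnesses that $\mathrm{int}(R_{\omega_k^{(n)}}) \cap A^{-1}(\mathrm{int}(R_{\omega_{k+1}^{(n)}})) \neq \emptyset$, and the characterization in Proposition \ref{prop:symbolic_twosided} then gives $\pi(\omega^{(n)}) = x_n$. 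By compactness of $\Omega_{\cA}$ as a closed subset of $[m]^{\bZ}$, pass to a convergent subsequence $\omega^{(n_k)} \to \omega \in \Omega_{\cA}$. The condition $\omega_0 = R$ is preserved in the limit since it depends only on a single coordinate, and continuity of $\pi$ yields $\pi(\omega) = \lim x_{n_k} = x$.

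For part (ii) the plan is a short pigeonhole argument exploiting the finite-to-one property of $\pi$. Set $x := \pi(\omega)$ and suppose $A^{p} x = x$ for some integer $p \geq 1$. Iterating the conjugation relation $\pi \circ \theta = A \circ \pi$ from Proposition \ref{prop:symbolic_twosided} yields $\pi(\theta^{kp}\omega) = A^{kp}x = x$ for every $k \in \bZ$, so the whole family $\{\theta^{kp}\omega : k \in \bZ\}$ is contained in the finite fibre $\pi^{-1}(x)$. By the pigeonhole principle there exist integers $k_1 \neq k_2$ with $\theta^{k_1 p}\omega = \theta^{k_2 p}\omega$, and hence $\theta^{(k_1 - k_2) p}\omega = \omega$, which is exactly the periodicity of $\omega$.

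The genuine difficulty lies entirely in part (i), and more precisely in the preliminary structural facts about Markov partitions that the argument takes as input: that every rectangle is a regular closed set, $R = \overline{\mathrm{int}(R)}$, and that $\partial \cR$ is a nowhere dense set preserved by iterations of $A$ in a well-behaved way. These rely on the local product structure of rectangles via stable and unstable foliations, which is the business of Section \ref{section:markov}; once they are granted, both the approximation-and-limit argument for (i) and the pigeonhole argument for (ii) are routine.
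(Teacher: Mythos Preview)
Your proof is correct. Part (ii) is essentially identical to the paper's argument: the paper phrases it as $\theta$ inducing a permutation on the finite set $\pi^{-1}(\{x, Ax, \ldots, A^{p-1}x\})$, but this is the same pigeonhole reasoning you give.

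For part (i) your approach works, but the paper's is shorter and avoids Baire category entirely. The paper simply observes that the cylinder $U := \{\omega \in \Omega : \omega_0 = R\}$ is compact, that $\pi^{-1}(\mathrm{int}\, R) \subset U$ (because rectangles are proper with disjoint interiors, so a point of $\mathrm{int}\, R$ can lie in no other rectangle, forcing $\omega_0 = R$), and hence by surjectivity $\mathrm{int}\, R \subset \pi(U)$; continuity of $\pi$ then gives $R = \overline{\mathrm{int}\, R} \subset \pi(U)$. Your explicit construction of generic points avoiding $D$, reading off their forced codings, and extracting a subsequential limit is effectively a hands-on unpacking of this compactness step. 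Both routes rely on exactly the same structural inputs (properness of rectangles, continuity and surjectivity of $\pi$, compactness of $\Omega$); the paper's version buys brevity, while yours makes the provenance of the coding more visible.
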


\subsection{Proof of Theorem \ref{thm:upperbound}}

Let $\rho \in \bT_n$ and $t \geq 1$. From the formula of the Fourier transform, multiply Equation \eqref{eq:fourier_general} by its conjugate to obtain
\begin{align} \label{eq:fourier_square}
	\abs{\widehat{P^{t}}(\rho)}^{2} &= \prod_{j=0}^{t-1} \abs{\hmu( (A^{\top})^j \rho)}^2 \\
	&= \prod_{j=0}^{t-1} \left( \sum_{x,y \in \supp \mu} \mu(x) \mu(y) e^{2 i \pi \innerprod{A^j (x-y)}{\rho} /n} \right). 
\end{align} 

For $k \geq 0$ and $\xi \in \bT^{d}$, define
\begin{equation*}
	f_k(\xi) := \sum_{x,y \in \supp \mu} \mu(x) \mu(y) e^{2 i \pi \innerprod{A^k (x-y)}{\xi}} 
\end{equation*}
so that $\abs{\widehat{P^{t}}(\rho)}^{2} = \prod_{k=0}^{t-1}  f_k(\rho/n)$.
For all $k, l \geq 0$, $0 \leq f_l \leq 1$ and $f_k \circ (A^{\top})^{l} = f_{k+l}$ hence for $k \in [0,d] = \{0, 1, \ldots, d \}$,
\begin{align*}
	\abs{\widehat{P^{t+d}}(\rho)}^{2} &= \prod_{l=0}^{k-1} f_{l}(\rho / n) \prod_{j=k}^{t+d-1} f_{j}(\rho / n) \\
	&\leq \prod_{j=0}^{t+d-k-1} f_{j+k}( \rho / n) \\
	&\leq \prod_{j=0}^{t+d-k-1} f_{k}( (A^{\top})^{j} \rho / n) \\
	&\leq \prod_{j=0}^{t-1} f_k \left( (A^{\top})^{j} \rho / n \right) .
\end{align*}
In particular, if we set 
\begin{equation}\label{eq: def_f}
	f := \min_{k \in [d]} f_k
\end{equation}
then we have for all $t \geq 0$,
\begin{equation}\label{eq:majoration_fourier}
	\abs{\widehat{P^{t+d}}(\rho)}^{2} \leq \prod_{j=0}^{t-1} f \left( (A^{\top})^{j} \rho / n \right). 
\end{equation}
Furthermore, the $f_k$ are continuous and thus so is $f$.

Consider now
\begin{equation}\label{eq:subgroupH}
	H := \langle \bigcup_{k \geq 0} A^{k} (\supp \mu - \supp \mu) \rangle
\end{equation}
the smallest subroup of $\bZ^{d}$ that contains $\supp \mu - \supp \mu$ and is invariant by $A$. By Cayley-Hamilton's theorem, we can in fact restrict the powers of $A$ to have exponent less than $d$. 

Let $u_1, \ldots, u_d$ be generators of $H$, although at this stage one does not require $H$ to have rank $d$. They can be chosen of the form $u_i = A^{k_i} (x_i - y_i)$ with $k_i \leq d$ and $x_i, y_i \in \supp \mu$.  Then let 
\[ 
	\alpha := \min \bigcup_{i} \left\{ \mu(x_i), \mu(y_i) \right\} > 0
\] 
and
\[ 
	W = \{ \xi \in \bT^d: \ \forall h \in H, \  \innerprod{h}{\xi} \in \bZ  \}.
\]

$W$ is a closed set of $\bT^{d}$ and is invariant by $A^{\top}$ since $H$ is invariant by $A$. If $\xi \notin W$, then one can find $h \in H$ such that $\innerprod{h}{\xi} \notin \bZ$. However $h$ being a linear combination with integer coefficients of the $A^{k_i}(x_i-y_i)$, we can directly assume that $h = A^{k_i}(x_i-y_i)$.
Then we deduce that 
\begin{align}
 f(\xi) &\leq 1 - 2 \mu(x_i) \mu(y_i) + 2 \mu(x_i) \mu(y_i) \cos 2 \pi \innerprod{A^{k_{i}} (x_i-y_i)}{\xi}  \nonumber \\
 &\leq 1 - 2 \alpha^{2} \abs{ 1- \cos 2 \pi \innerprod{A^{k_i} (x_i-y_i)}{\xi} \label{eq:majoration_exp} } \\
 & < 1 \nonumber.
\end{align}
By continuity of $f$ we get for all $\eta > 0$
\begin{equation}
	\sup_{\xi: \ d(\xi, W) \geq \eta}	f(\xi) \leq \gamma
\end{equation}
for some $\gamma= \gamma(\eta) < 1$.
Thus our goal is now to prove that the points on the orbit of $\rho/n$ spend a considerable amount of time away from the “bad set” $W$.

For this, we will use the symbolic representation of the orbits with an explicit description of the set $W$. So far Fourier transforms were expressed in the canonical basis but this choice is not well-suited for expressing the elements of $H$. On the other hand by considering a basis adapted to $H$, $H$ becomes generated by multiple of the basis vectors the set $W$ takes a very simple form. We use the following well known result (see \cite[Chap. III Thm. 7.8]{lang2002algebra}) which can be deduced from the Smith normal form of a matrix. We recall a basis of a subgroup $H \subseteq \bZ^{d}$ is a minimal generating family of $H$. The terminology comes from the fact that subgroups of $\bZ^{d}$ are free modules over the ring $\bZ$.

\begin{lem}\label{lem:abelian_subgroups}
	Let $H$ be a subgroup of $\bZ^{d}$. Then there exists a basis $(u_i)_{i=1}^{d}$ of $\bZ^{d}$ and positive integers $a_1, \ldots, a_k$, such that $a_i$ divides $a_{i+1}$ for all $i=1, \ldots, k-1$ and $a_1 u_1, \ldots, a_k u_k$, forms a basis of $H$. The family $(a_i)_{i=1}^{k}$ is uniquely determined by the previous conditions.
\end{lem}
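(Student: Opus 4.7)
The plan is to derive this as a consequence of the Smith normal form for integer matrices. The argument naturally splits into three parts: showing that $H$ is finitely generated (so that working with matrices is legitimate), obtaining the canonical diagonal form, and establishing uniqueness of the invariants $a_i$.

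First I would establish that any subgroup $H \subseteq \bZ^d$ is finitely generated, of rank at most $d$. I would proceed by induction on $d$: for $d=1$, $H$ is either $\{0\}$ or $a\bZ$ for some $a \geq 1$; for $d \geq 2$, project $H$ onto the last coordinate via $\pi : \bZ^d \to \bZ$. The image $\pi(H)$ is cyclic, generated by some integer $a \geq 0$. If $a = 0$, then $H \subseteq \bZ^{d-1}$ and the inductive hypothesis applies directly. Otherwise, picking $h_0 \in H$ with $\pi(h_0)=a$, one has the internal direct sum $H = \bZ h_0 \oplus (H \cap \ker \pi)$, with $H \cap \ker \pi$ a subgroup of $\bZ^{d-1}$, again finitely generated by induction.

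Next, let $v_1, \ldots, v_k$ be generators of $H$ (with $k \leq d$) and form the $d \times k$ integer matrix $M$ whose columns are the coordinates of the $v_j$ in the canonical basis. The Smith normal form theorem produces unimodular matrices $P \in \GL_d(\bZ)$ and $Q \in \GL_k(\bZ)$ such that $PMQ = D$ is diagonal with nonnegative entries $a_1 \mid a_2 \mid \cdots \mid a_k$. Discarding any zero entries (and shrinking $k$ accordingly), I set $(u_1, \ldots, u_d)$ to be the columns of $P^{-1} \in \GL_d(\bZ)$, which is therefore a basis of $\bZ^d$. Right multiplication by $Q$ simply replaces $(v_1, \ldots, v_k)$ by another generating family of $H$, so the columns of $P^{-1} D$ --- which are precisely $a_1 u_1, \ldots, a_k u_k$ --- form a basis of $H$ as required.

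For uniqueness of the $a_i$, I would use the standard characterization through determinantal divisors: the product $a_1 \cdots a_j$ equals the greatest common divisor of all $j \times j$ minors of any matrix whose columns generate $H$ in any basis of $\bZ^d$. Since unimodular row and column operations preserve these gcds, the $a_i$ are intrinsic invariants of $H$. The main obstacle is proving the Smith normal form itself. The core step is the following minimality argument: among all pairs $(P,Q)$ of unimodular matrices, choose one minimizing the smallest nonzero absolute value appearing in $PMQ$, and call this pivot $a_1$. A Euclidean division forces $a_1$ to divide every other entry of $PMQ$, for otherwise one further row or column operation would produce a strictly smaller nonzero entry, contradicting minimality. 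Once the pivot is isolated and divides everything, one iterates on the $(d-1) \times (k-1)$ submatrix. The divisibility chain $a_i \mid a_{i+1}$ follows from the same minimality consideration applied at each stage. These details are standard but lengthy, which is why the statement is simply cited from Lang's textbook.
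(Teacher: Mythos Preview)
Your proposal is correct and follows exactly the route the paper indicates: the paper does not give a proof of this lemma but simply cites it as a well-known result from Lang's \emph{Algebra}, noting that it ``can be deduced from the Smith normal form of a matrix.'' Your write-up supplies precisely that deduction, together with the standard preliminary that subgroups of $\bZ^d$ are free of rank at most $d$ and the determinantal-divisor argument for uniqueness, so there is nothing to add.
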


\paragraph{Change of basis}

By the previous lemma, there exists a basis $(u_i)_{i \in [d]}$ of $\bZ^{d}$ and a family of positive integers $(a_i)_{i \in [d]}$  such that the subgroup $H$ has basis $(a_i u_i)_{i=1}^{d}$. Let $Q$ be the matrix formed from the vectors $u_i$, expressed in the canonical basis, so that $Q e_i = u_i$ for all $i \in [d]$. 

We can now express everything in the basis $(u_i)_{i \in [d]}$: this comes down to replace the map $A$ by $Q^{-1} A Q$ which has the same desired properties as $A$: it is in $\GL_{d}(\bZ)$ and hyperbolic, while representation $\rho$ has to be pulled-back by $Q$, ie it has to be replaced by $\rho \circ Q$. Notice now that the upper bound lemma \eqref{eq:upperboundlemma} involves summing over all non-trivial representations, which form a set left invariant by the pull-back operation, so we can simply forget about this pull-back.

Hence, we can now suppose that all vectors expressions are given in a basis adapted to $H$, so $H$ is generated by $(a_i e_i)_{i \in [d]}$. 
Then one easily obtains from the definition of $W$ that

\begin{equation}\label{eq:setW}
	W = \left\{ \left( \frac{k_i}{a_i} \right)_{i=1}^{d}, k_i \in [0,a_i - 1] \ \forall i \in [d] \right\}.
\end{equation}

\begin{lem}\label{lem:unstable_lower_bound}
	There exists constants $c_1, c_2 > 0$ such that the following holds. Let $\rho \in \bT_n \smallsetminus \{0\}$ and $y$ be either a point of $W$ or $y = \rho' / n$ with $\rho' \in \bT_n \smallsetminus \{0\}$ distinct from $\rho$. Then either $d(\rho / n, y) \geq \epsilon$ or, letting $v$ be the unique representative of $\rho / n - y$ in $B(0, \epsilon)$,
	\begin{equation}\label{eq:unstable_lower_bound}
		\norm{v_u} \geq \frac{c_1}{n^{c_2}}.
	\end{equation} 
	where $v=v_s + v_u$ is the decomposition of $v$ into stable and unstable component.
\end{lem}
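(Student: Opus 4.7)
} The strategy is to reduce the statement to a purely Diophantine estimate for projections of integer vectors onto the unstable subspace $E_u$, and then prove this estimate using the expanding/contracting dynamics of $A$ on the two factors of the splitting. We work throughout with the basis adapted to $H$, so $W$ has the explicit form \eqref{eq:setW}.

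\medskip

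\emph{Step 1 (lift to an integer vector).} Assume $d(\rho/n, y) < \epsilon$, otherwise there is nothing to prove. Let $a := \mathrm{lcm}(a_1,\dots,a_d)$, a constant independent of $n$, and set $N := n a$. I claim $l := N v$ is a nonzero element of $\bZ^d$ with $\norm{l} \leq Na \epsilon \leq a \epsilon \, n$. Indeed, in Case $1$ one has $N v \equiv a(\rho - \rho') \pmod{N \bZ^d}$, which is a nonzero integer vector modulo $N$ since $\rho \neq \rho'$ in $\bT_n$; in Case $2$ the entries of $y$ are $k_i/a_i$, so $N y \in \bZ^d$ and $N v \in \bZ^d$. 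The fact that $v \neq 0$ (equivalently, $\rho/n \neq y$ in $\bT^d$) is automatic in Case $1$, and in Case $2$ follows from $\rho \neq 0$ together with the coprimality of $n$ with every $a_i$, which prevents $\rho/n$ from hitting the lattice $W$.

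\medskip

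\emph{Step 2 (key Diophantine estimate).} I will prove that there exist constants $c_0 > 0$ and $\alpha > 0$, depending only on $A$, such that for every $l \in \bZ^d \setminus \{0\}$,
\[
 \norm{\pi_u(l)} \;\geq\; \frac{c_0}{\norm{l}^{\alpha}},
\]
where $\pi_u$ is the projection onto $E_u$ along $E_s$. The argument uses the dynamics: for every $k \geq 0$, $A^{k} l$ belongs to $\bZ^d \setminus \{0\}$ (because $A \in \GL_d(\bZ)$), so $\norm{A^{k} l} \geq m$ for some absolute constant $m > 0$ depending only on the adapted norm. Decomposing $l = \pi_s(l) + \pi_u(l)$, the adapted-norm bound $\norm{A|_{E_s}} \leq \lambda < 1$ gives $\norm{A^{k} \pi_s(l)} \leq \lambda^{k} \norm{l}$, which drops below $m/2$ once $k \geq K := \lceil \log(2\norm{l}/m)/\log(1/\lambda) \rceil$. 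For such $k$ we must have $\norm{A^{k} \pi_u(l)} \geq m/2$. On the other hand $\norm{A^{k} \pi_u(l)} \leq M^{k} \norm{\pi_u(l)}$ with $M := \norm{A|_{E_u}}$, yielding $\norm{\pi_u(l)} \geq (m/2) M^{-K}$, i.e. a polynomial lower bound in $\norm{l}$ with exponent $\alpha = \log M / \log(1/\lambda)$.

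\medskip

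\emph{Step 3 (conclusion).} Combining Steps $1$ and $2$, since $\pi_u(l) = N v_u$,
\[
 \norm{v_u} \;=\; \frac{\norm{\pi_u(l)}}{N} \;\geq\; \frac{c_0}{N \, \norm{l}^{\alpha}} \;\geq\; \frac{c_0}{(a \epsilon)^{\alpha+1} \, n^{\alpha+1}},
\]
which gives the claim with $c_2 := \alpha + 1$ and a suitable $c_1 > 0$.

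\medskip

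The main obstacle is Step $2$: one needs a \emph{quantitative} transversality statement between the integer lattice and the stable subspace, not just the qualitative fact that $E_s \cap \bZ^d = \{0\}$. The clean resolution is to exploit hyperbolicity and the fact that $A$ preserves $\bZ^d$: the contraction of $E_s$ under forward iteration must be compensated, on every nonzero integer orbit, by expansion along $E_u$, which forces $\pi_u(l)$ to be at least a polynomial fraction of $\norm{l}^{-1}$. Steps $1$ and $3$ are essentially bookkeeping once the correct denominator $N = n \cdot \mathrm{lcm}(a_i)$ is identified.
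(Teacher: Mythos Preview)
Your proof is correct and, apart from two cosmetic slips (in Step~1 the bound should read $\norm{l}\le N\epsilon=a\epsilon\,n$, not $Na\epsilon$; in Step~3 the constant should be $c_0/(a^{\alpha+1}\epsilon^{\alpha})$ rather than $c_0/(a\epsilon)^{\alpha+1}$), it is complete.

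The underlying mechanism is the same as in the paper: iterate the map forward until the stable component of $v$ becomes negligible, and use discreteness to force the remaining size into the unstable component. The execution, however, is genuinely different. The paper stays on the torus, invokes the $A^\top$-invariance of both $W$ and $\{\rho'/n:\rho'\in\bT_n\smallsetminus\{0\}\}$ to guarantee that the coordinate lower bound $c_1/n$ persists along the orbit, and then has to split into cases according to whether the iterates $A^{k}v$ remain inside $B(0,\epsilon)$. You instead clear denominators once and for all by multiplying by $N=n\,\mathrm{lcm}(a_i)$, reducing everything to the clean lattice estimate $\norm{\pi_u(l)}\ge c_0\norm{l}^{-\alpha}$ for $l\in\bZ^d\smallsetminus\{0\}$. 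This buys you two things: no case analysis, and no appeal to the invariance of $W$ (only $A\in\GL_d(\bZ)$ is used). Your exponent $c_2=1+\log\norm{A|_{E_u}}/\log\lambda^{-1}$ is also at least as good as the paper's $1+\log\norm{A}/\log\lambda^{-1}$. The paper's approach, on the other hand, avoids introducing the auxiliary integer $a=\mathrm{lcm}(a_i)$ and makes the role of the two finite $A^\top$-invariant sets more visible, which matches how the lemma is later used via Lemma~\ref{lem:blocks}.
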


\begin{proof}	
	Consider the case of $y=w\in W$. Then there exist integers $(k_i)_{i=1}^{d}$ such that $w= \left( \frac{k_i}{a_i} \right)_{i=1}^{d}$ while $\rho = (\rho_i)_{i \in [d]} \in \bT_{n} \smallsetminus \{0\}$. Suppose that $d(\rho/n, w) < \epsilon$. Assuming that $n$ is coprime with the $a_i$, it is impossible to have $\rho / n = w$ and furthermore any non-zero coordinate has absolute value
	\[ 
		\abs{(\rho / n - w)_i} = \abs{\frac{\rho_i a_i - k_i n}{a_i n}} \geq \frac{1}{a_i n}.
	\]
	By equivalence of norms we deduce there exists a constant $c_1=c_1(A, a_i) \in (0,1)$ such that 
	\[ 
		\norm{v} = \norm{\rho/n - w} \geq \frac{c_1}{n}.
	\]
	Since the norm is adapted, this implies that $\norm{v_u} \geq c_1 / n$, in which case the claimed result holds, or $\norm{v_s} \geq c_1 / n$. In that case, one still has $\norm{v_s} \leq \epsilon$. Thus for $k > \log n / \log (\lambda^{-1})$ and $n$ large enough, 
	\[ 
		\norm{A^{k} v_s} \leq \lambda^{k} \norm{v_s} < \frac{c_1}{n}.
	\]
	Now if $\norm{A^{l} v} < \epsilon$ for all $0 \leq l \leq k$, then $A^{k} v$ is the representative in $B(0, \epsilon)$ of $A^{k} \rho/n - A^{k} w$. Since the sets $\{ \rho'/n, \rho' \in \bT_n \smallsetminus \{0\} \}$ and $W$ are left invariant by $A$, $A^{k} v$ must also satisfy: 
	\[ 
		\norm{A^{k} v} \geq \frac{c_1}{n}.
	\]
	From the choice of $k$ we deduce that $\norm{A^{k} v_u} \geq c_1 / n$. Then $\norm{A^{k} v_u} \leq \norm{A}^{k} \norm{v_u}$ yields
	\[ 
		\norm{v_u} \geq \frac{c_1}{\norm{A}^{k} n} = \frac{c_1}{n^{1+\log \norm{A} / \log \lambda^{-1}}}.
	\]
	Finally if $A^{l} v$ leaves the ball $B(0, \epsilon)$ for some $0 \leq l \leq k$, it can only expand in the unstable direction so necessarily $\norm{A^{l} v_u} \geq \epsilon \geq c_2 / n$ for $n$ large enough and the same conclusion holds.   
	
	The case $y =\rho'/n$ is proved similarly.
\end{proof}

Let us now turn to symbolic representations. Consider a Markov partition $\cR$ associated to $A^{\top}$, of diameter $\delta$ determined later on, as given by Proposition \ref{prop:existence_partitions}. Let $m := \abs{\cR}$ be the number of rectangles and $(\Omega, \pi)$ the symbolic representation given by the Markov partition (Proposition \ref{prop:symbolic_twosided}). 
Consider the set $\cR_0$ of rectangles which contain a point of $W$ and set $\cR_1 := \cR \smallsetminus \cR_0$. We define
\[ 
	\delta_0 := \min \left(\ec, \frac{\min_{x \neq y \in W} d(x,y)}{1+\norm{\AT}} \right),
\]
which is positive by finiteness of $W$.
From now on, we identify $[m]$ with the set of rectangles $\cR$. In particular, given a sequence $\omega \in \Omega$, we will write $\omega_k \in \cR_{0}$ to indicate that $R_{\omega_k} \in \cR_{0}$. Similarly, we may use the words letters and rectangles interchangeably.

\begin{lem}\label{lem:symbolic_W}
	Suppose the diameter of the partition satisfies $\delta < \delta_0 $. Then
	\begin{enumerate}[label=(\roman*)]
		\item for all $i \in \cR_0$, there exists a unique $j \in \cR_0$ such that $\cA(i,j) = 1$;
		\item \[ 
		W = \left\{ \pi(\omega): \forall k \in \bZ,  \omega_{k} \in \cR_0\, \right\}.
		\]
%		\item In the general case, if $\omega \in \Omega$ and for all $k \geq 0, \omega_k \in \cR_0$, then $\pi(\omega)$ contains a point of $W$.
	\end{enumerate}
\end{lem}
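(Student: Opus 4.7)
The strategy is to exploit two consequences of the diameter hypothesis $\delta < \delta_0$: first, $\delta < \min_{x \neq y \in W} d(x,y)$, so each rectangle contains at most one point of $W$; second, $\delta < \ec$, so that $\AT$ is locally $\norm{\AT}$-Lipschitz on balls of radius $\delta$. Throughout, I write $w_i$ for the unique point of $W$ in $R_i$ whenever $i \in \cR_0$, and recall that $\AT$ restricts to a bijection on the finite $\AT$-invariant set $W$.

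For (i), existence, my plan is to apply Proposition \ref{prop:symbolic_periodic}(i) to $w_i \in R_i$: this produces $\omega \in \Omega_{\cA}$ with $\omega_0 = i$ and $\pi(\omega) = w_i$; the shifted sequence $\theta\omega$ then codes $\AT w_i \in W$, so $R_{\omega_1}$ meets $W$, whence $j := \omega_1 \in \cR_0$, and $\cA(i,j) = 1$ by admissibility. For uniqueness, I would take $j \in \cR_0$ with $\cA(i,j) = 1$ and pick $x \in \mathrm{int}(R_i)$ with $\AT x \in \mathrm{int}(R_j)$. From $x, w_i \in R_i$ we have $d(x, w_i) \leq \delta$, so by the Lipschitz bound $d(\AT x, \AT w_i) \leq \norm{\AT} \delta$; combined with $d(\AT x, w_j) \leq \delta$, the triangle inequality gives
\[ d(\AT w_i, w_j) \leq (1 + \norm{\AT}) \delta < \min_{x \neq y \in W} d(x,y), \]
and so $w_j = \AT w_i$. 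This pins down the point of $W$ inside $R_j$; to conclude that $R_j$ itself is unique requires moreover that no two distinct rectangles of $\cR_0$ share the same point of $W$, which should follow from the Markov structure of the partition (its boundaries being built from pieces of stable and unstable manifolds).

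For (ii), the forward inclusion is immediate: for $w \in W$ and $\omega \in \pi^{-1}(w)$, we have $(\AT)^k w = \pi(\theta^k \omega) \in R_{\omega_k}$ and $(\AT)^k w \in W$, so every $\omega_k \in \cR_0$. For the reverse inclusion, given $\omega$ with all $\omega_k \in \cR_0$, I would set $y_k := (\AT)^k \pi(\omega) \in R_{\omega_k}$, observe that $d(y_k, w_{\omega_k}) \leq \delta$, and rerun the Lipschitz argument above (with $y_k$ in place of $x$) to get $d(w_{\omega_{k+1}}, \AT w_{\omega_k}) \leq (1+\norm{\AT})\delta$, hence $w_{\omega_{k+1}} = \AT w_{\omega_k}$. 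Iterating yields $w_{\omega_k} = (\AT)^k w_{\omega_0}$, so $d((\AT)^k \pi(\omega), (\AT)^k w_{\omega_0}) \leq \delta < \ec$ for every $k \in \bZ$. The expansiveness property (Corollary \ref{coroll:expansiveness}) then forces $\pi(\omega) = w_{\omega_0} \in W$, as desired.

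The step I expect to be the main obstacle is the uniqueness in (i): the Lipschitz bound cleanly determines the point $w_j = \AT w_i$, but concluding that $R_j$ is the unique such rectangle requires using the Markov structure of the partition beyond what is made explicit in the excerpt. Without this extra input, two distinct rectangles of $\cR_0$ could a priori share a common boundary point belonging to $W$; ruling this out is where the geometric content of Markov partitions really enters, and will likely be the most delicate part of the full write-up.
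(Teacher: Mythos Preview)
Your argument for (ii) and for the existence part of (i) is essentially identical to the paper's; the pseudo-orbit/Lipschitz argument and the appeal to expansiveness are exactly what the paper does.

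The gap you identify in the uniqueness of (i) is real, and your proposed way out (appealing to the geometry of Markov partition boundaries) is not how the paper proceeds. The paper resolves it by purely symbolic means, using the \emph{finite-to-one} property of $\pi$ together with periodicity. Given $j' \in \cR_0$ with $\cA(i,j')=1$, the paper builds (by existence plus compactness) a sequence $\tau' \in \Omega$ with $\tau'_0=i$, $\tau'_1=j'$ and $\tau'_k \in \cR_0$ for all $k$; by part (ii), $\pi(\tau')$ lies in $W \cap R_i$, hence equals $w_i$. So both $\tau$ (with $\tau_1=j$) and $\tau'$ code the same periodic point $w_i$, and by Proposition~\ref{prop:symbolic_periodic}(ii) both are periodic, say with fundamental blocks $B$ and $B'$. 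If $B \neq B'$, one can freely substitute copies of $B$ by $B'$ to produce infinitely many distinct codings of $w_i$, contradicting that $\pi$ is finite-to-one. Hence $B=B'$ and in particular $j=j'$.

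So the missing idea is not a geometric fact about boundaries of rectangles but rather the combinatorial input from Proposition~\ref{prop:symbolic_twosided} (finite-to-one) and Proposition~\ref{prop:symbolic_periodic}(ii) (periodic points have periodic codings). Your Lipschitz argument correctly pins down the \emph{point} $\AT w_i$ but cannot by itself distinguish rectangles sharing that boundary point; the paper sidesteps this entirely.
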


\begin{proof}
	\begin{enumerate}
		We start proving (ii). 
		
		One inclusion in (ii) is easy. For all $w \in W$ by surjectivity of $\pi$ there exists $\tau \in \Omega$ such that $w = \pi(\tau)$. Then by definition of $\pi$, $(A^{\top})^{k} w \in R_{\tau_k}$ for all $k \in \bZ$. However $W$ is invariant by $A^{\top}$, so by definition of $\cR_0$ $\tau_k \in \cR_0$. This holds for every sequence $\tau \in \pi^{-1}(w)$.
		
		Conversely, suppose $\tau \in \Omega$ is such that $\tau_k \in \cR_0$ for all $k \geq 0$ and let $x = \pi(\tau)$. Then for all $k \geq 0$ there exists $w_k \in R_{\tau_k} \cap W$ at distance at most $\delta$ from $(\AT)^{k} x$. The sequence $(w_{k})_{k \geq 0}$  is a $\delta(1+\norm{\AT})$-pseudo orbit, in the sense
		\begin{align*}
			\forall k \geq 0 \quad d(w_{k+1}, \AT w_k) &\leq d(w_{k+1}, (\AT)^{k+1} x) + d(\AT (\AT)^{k} x, \AT w_k) \\
			&\leq (1+\norm{\AT}) \delta.
		\end{align*}
		Therefore if $\delta (1+\norm{\AT}) < \min_{x \neq y \in W} d(x,y)$, the sequence $(w_k)_{k \in \bZ}$ is necessary the orbit of a point $w \in W$. Then we obtain $d((\AT)^k x, (\AT)^k w) \leq \delta$ for all $k \in \bZ$. If $\delta \leq \ec$, Proposition \ref{prop:expansiveness} implies $x =w$. Hence $x \in W$.
		
		We now prove (i). Consider $i \in \cR_0$. By definition $R_i$ must contain a point $w \in W$. From Proposition \ref{prop:symbolic_periodic} (i), $w$ admits a symbolic representation $\tau$ such that $\tau_0 = i$. Let $j := \tau_1$. Then $\cA(i,j) = 1$ and $R_j$ contains $A^{\top}w \in W$ so $j \in \cR_0$. This proves existence.
		
		Suppose now $\cA_{i,j'} = 1$ with $j' \in \cR_0$. By the existence results one can find for all $l \geq 2$ $\tau^{(l)} \in \Omega$ such that $\tau^{(l)}_0 = i, \tau^{(l)}_1 = j'$ and $\tau^{(l)}_k \in \cR_0$ for all $k \leq l$. By compactness of $\Omega$, $(\tau^{(l)})_{l}$ admits a subsequence converging towards $\tau' \in \Omega$, which satisfies $\tau'_0 =i, \tau'_1 = j'$ and $\tau'_k \in \cR_0$ for all $k \geq 0$. By (ii), $\pi(\tau') =: w'$ is a point of $W$ which is also in the rectangle $R_i$. However notice that $\delta < \delta_0 < \min_{x \neq y \in W} d(x,y)$ so every rectangle of the Markov partition contains at most one point of $W$ and necessarily $w' = w$. We thus proved that $\tau, \tau'$, or any other sequence in $\cR_{0}^{\bZ}$ starting with $i$, represent the same element $w$. A last observation to make is that by Proposition \ref{prop:symbolic_periodic} (ii), $\tau, \tau'$ are periodic sequences. Thus $\tau, \tau'$ are the concatenation infinitely many times of a same block of finite size, say $B$ for $\tau$ and $B'$ for $\tau'$. Substituting any block $B$ by a block $B'$ and vice versa, one obtains other representations of $w$. Since $\pi$ is finite-to-one, there can only be finitely many such representations, which implies $B=B'$ and in particular $j = j'$. Hence uniqueness. 
	\end{enumerate}

\end{proof}

Given an integer $k \geq 1$, $\xi \in \bT^{d}$ and a symbolic representation $\omega=(\omega_i)_{i \geq 0} \in \pi^{-1}(\xi)$ of $\xi$. we decompose $\omega$ into blocks of $k$ letters: $B_{i}(\omega) := \omega_{(i-1)k} \ \cdots \ \omega_{ik - 1}$, which we call $k$-blocks. We set $B_i(\xi) := \{ B_{i}(\omega), \omega \in \pi^{-1}(\xi) \}$ to be the set of all blocks that are the $i$-th block of some symbolic representation of $\xi$. When considering $\xi = \rho / n$, we may simply write $B_{i}(\rho)$ instead of $B_{i}(\rho / n)$.

\begin{lem}\label{lem:blocks}
	Let $\cR$ be a Markov partition of diameter $\delta < \delta_{0}$, and some constant $c$. For all $n\geq 1$ let
	\begin{equation}\label{eq:size_kblocks}
		k = 1+\lceil \frac{c_2}{\log \lambda^{-1}} \log n \rceil.
	\end{equation}
	and decompose symbolic representations into blocks of $k$ letters. For large enough $n$:
	\begin{enumerate}[label=(\roman*)]
		\item For all $\rho \in \bT_n \smallsetminus \{ 0 \}$, every block of $B_1(\rho)$ contains at least one letter in $\cR_1$.
		\item The sets of blocks $(B_{1}(\rho)_{\rho \in \bT_n})$ are all disjoint.
		\item The family $(B_{i}(\rho)_{\rho \in \bT_n})$ is independent of $i \in \bZ$.
	\end{enumerate}
\end{lem}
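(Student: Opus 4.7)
The plan is to exploit the quantitative interplay between Proposition \ref{prop:expansiveness}, applied to the map $A^{\top}$, and the lower bound of Lemma \ref{lem:unstable_lower_bound}. If two candidate orbits stay within $\epsilon$ for $k$ consecutive iterations, then the unstable component of their initial difference is at most $\lambda^{k-1} \epsilon$; on the other hand, two distinct points of the form $\rho/n$, or a point $\rho/n$ and a point $w \in W$, must differ in the unstable direction by at least $c_1 n^{-c_2}$ (provided their distance is already smaller than $\epsilon$). The choice of $k$ in \eqref{eq:size_kblocks} is calibrated so that $\lambda^{k-1} \leq n^{-c_2}$, so once $\epsilon$ is taken smaller than $c_1$ and the partition diameter $\delta$ is taken below $\min(\delta_{0}, \epsilon)$, the two inequalities become incompatible.

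For (i), I would argue that a $k$-block all of whose letters lie in $\cR_0$ is rigid: by Lemma \ref{lem:symbolic_W}(i), admissibility together with uniqueness of the $\cR_0$-successor forces $\omega_1, \ldots, \omega_{k-1}$ to coincide with the first $k-1$ letters (starting from $\omega_0$) of the symbolic representation of the unique $w \in W \cap R_{\omega_0}$, whose existence follows from $\omega_0 \in \cR_0$ and whose uniqueness follows from $\delta < \delta_{0}$ (ensuring each rectangle contains at most one point of $W$). Consequently, $(A^{\top})^j (\rho/n)$ and $(A^{\top})^j w$ lie in the same rectangle for $0 \leq j \leq k-1$ and hence remain within $\delta < \epsilon$. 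Proposition \ref{prop:expansiveness} then gives an unstable separation of at most $\lambda^{k-1} \epsilon$, while Lemma \ref{lem:unstable_lower_bound} requires at least $c_1 n^{-c_2}$. With $\epsilon < c_1$ these contradict each other, proving (i). Claim (ii) follows by the exact same argument with $w$ replaced by a second $\rho'/n \neq \rho/n$: a shared block between $B_1(\rho)$ and $B_1(\rho')$ provides two representations agreeing on $k$ consecutive letters, trapping the orbits of $\rho/n$ and $\rho'/n$ in a common rectangle sequence, and the same collision of bounds closes the argument.

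For (iii), the intertwining relation $\pi \circ \theta = A^{\top} \circ \pi$ implies that $\theta^{(i-1)k}$ sends $\pi^{-1}(\rho/n)$ bijectively onto $\pi^{-1}((A^{\top})^{(i-1)k} \rho/n)$. Since $B_i(\omega)$ is by definition the first $k$-block of $\theta^{(i-1)k} \omega$, this identifies $B_i(\rho)$ with $B_1((A^{\top})^{(i-1)k} \rho)$. Because $(A^{\top})^{(i-1)k}$ restricts to a bijection of $\bT_n$ (the determinant of $A$ being a unit), the family $(B_i(\rho))_{\rho \in \bT_n}$ is obtained from $(B_1(\rho))_{\rho \in \bT_n}$ by a permutation of the index set, which is what is meant by independence in $i$. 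The remaining subtleties are bookkeeping: checking that the ceiling in \eqref{eq:size_kblocks} still gives $\lambda^{k-1} \leq n^{-c_2}$, and selecting $\epsilon$ and $\delta$ small enough to simultaneously meet all smallness constraints (applicability of Proposition \ref{prop:expansiveness}, $\epsilon < c_1$, and $\delta < \min(\delta_0, \epsilon)$); the main conceptual point, and the one I expect to require the most care when written in full, is the rigidity step in (i) that turns a $\cR_0$-only block into the shadow of an actual $W$-orbit.
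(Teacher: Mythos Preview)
Your proposal is correct and follows essentially the same route as the paper: for (i) the paper also extends an all-$\cR_0$ block to a full admissible sequence in $\cR_0^{\bZ}$ (invoking the argument behind Lemma~\ref{lem:symbolic_W}(i)), identifies its $\pi$-image as a point $w\in W$ via Lemma~\ref{lem:symbolic_W}(ii), and then plays the expansiveness bound \eqref{eq:expansiveness_future} against Lemma~\ref{lem:unstable_lower_bound}; (ii) is handled identically with $\rho'/n$ in place of $w$; and (iii) is deduced from $A^{\top}$ being a bijection of $\bT_n$ together with $\pi\circ\theta=A^{\top}\circ\pi$. The only cosmetic difference is that the paper absorbs the constant mismatch into ``$n$ large enough'' rather than imposing $\epsilon<c_1$, but this changes nothing of substance.
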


\begin{proof}
	\begin{enumerate}[label=(\roman*)]
		\item Consider any symbolic representation $\omega$ of $\rho \neq 0$. If $\omega_0, \ldots, \omega_{k} \in \cR_{0}$, from the proof of Lemma \ref{lem:symbolic_W} (i), there exists $\omega' \in \Omega$ with $\omega'_l = \omega_l$ for all $0 \leq l \leq k$ and $\omega'_l \in \cR_0$ for all $l \in \bZ$. Lemma \ref{lem:symbolic_W} (ii) tells us that $\pi(\omega') =: w$ is a point of $W$. $w$ is at distance at most $\delta$ from $\rho / n$ so we can consider the unique representative $v$ of $\rho/n - w$ in $B(0, \ec)$. Since $\omega$, $\omega'$ coincide up to rank $k$, \eqref{eq:expansiveness_future} yields $\norm{v_u} \leq \ec \lambda^{k}$, which contradicts \eqref{eq:unstable_lower_bound} for $k \geq c_2 / \log \lambda^{-1} \log n$ and $n$ large enough. Hence for $k$ as in \eqref{eq:size_kblocks} $\omega_l \in \cR_1$ for some $0 \leq l \leq k-1$.
		
		\item The proof is the same as (i), since if $\rho, \rho' \in \bT_n$ are distinct, $\rho/n - \rho'/n$ has unstable component lower bounded by $\min(\ec, c_1 n^{-1-c_2})$ by Lemma \ref{lem:unstable_lower_bound}.
		
		\item The matrix $A^{\top}$ is a bijection on $\bT_{n} \smallsetminus \{0 \}$, so the shift by $k$ letters on $\Omega$ induces a permutation on the blocks.    
	\end{enumerate}
\end{proof}

The rest of the proof is now similar to the original argument of Chung, Diaconis and Graham \cite{chung1987random}. Let $k$ be as in \eqref{eq:size_kblocks}.

By definition, rectangles of $\cR_1$ are closed sets which contain no point of $W$, and $\cR_1$, $W$ are both finite sets, thus $\eta := \min_{R \in \cR_1} d(R,W)$ must be positive. By equation \eqref{eq:majoration_exp} and the continuity of the function $f$ \eqref{eq: def_f}, there exists $\gamma= \gamma(\eta, \alpha) \in (0,1)$ such that $f$ is bounded by $\gamma$ on the compact set $\{\xi \in \bT_{d}, d(x,W) \geq \eta \}$. In particular $f(\rho / n) \leq \gamma$ if $\rho / n$ is contained in a rectangle of $\cR_1$.

Combining this observation with Lemma \ref{lem:blocks} and equation \eqref{eq:majoration_fourier}, we deduce that for all $\rho \in \bT_{n} \smallsetminus \{ 0\}$, for all $r \geq 0$
\begin{align*}
	\abs{\widehat{P^{rk'+d}}(\rho)}^{2} &\leq \gamma^{g(\rho / n)} \\
	&\leq \prod_{i=1}^{r} \gamma^{g(B_{i}(\rho/n))}
\end{align*}
where $g(\rho / n)$ is the number of letters in $\cR_1$ appearing in the first $rk$ letters of any symbolic representation of $\rho / n$, and $g(B_{i}(\rho/n))$ is the maximal number of letters in $\cR_1$ appearing in the $i$-th $k'$-block of some symbolic representation of $\rho/n$. 

As in \cite{chung1987random}, we then make use of the following interchange lemma in order to regroup similar blocks when summing over $\rho \neq 0$: for all $a \leq a'$, $b \leq b'$,
\begin{equation}\label{eq:interchange}
 	\gamma^{a+b'} + \gamma^{a'+b} \leq \gamma^{a+b} + \gamma^{a'+b'}
\end{equation}
Using point (iii) of Lemma \ref{lem:blocks}, we deduce the bound
\begin{align*}
	\sum_{\rho \neq 0} \abs{\widehat{P^{rk+d}}(\rho)}^{2} \leq \sum_{\rho \neq 0}  \gamma^{ r g(B_{1}(\rho/n))}
\end{align*}
Then, using point (i) and (ii) of Lemma \ref{lem:blocks}, this sum can be bounded by the sum over all blocks of length $k$ with at least one letter in $\cR_1$. Let $m_0 := \abs{\cR_0}$, $m_1 = \abs{\cR_1} = m - m_0$. Once the positions of the rectangles in $\cR_1$ have been determined, there are $m_{1}^{j}$ choices of such rectangles. On the other hand, Lemma \ref{lem:symbolic_W} (i) shows that the sequences containing only rectangles in $\cR_{0}$ are completely determined by their first letter. Thus the total number of blocks with $j$ letters in $\cR_1$ is upper bounded by $\binom{k}{j} (m_{1} m_{0})^{j}$. Hence
\begin{align*}
	\sum_{\rho \neq 0} \abs{\widehat{P^{rk+d}}(\rho)}^{2} &\leq \sum_{\rho \neq 0} \gamma^{ r g(B_{1}(\rho/n))} \\
	&\leq \sum_{j=1}^{k} \binom{k}{j} (m_{1} m_{0})^{j} \gamma^{ r j} \\
	&= \left( \left(1 + m_{0} m_{1} \gamma^{r} \right)^{k} - 1 \right) \\
	&\leq \left( e^{k m_{0} m_{1} \gamma^{r}} - 1 \right) .
\end{align*}

Finally use the upper bound lemma \eqref{eq:upperboundlemma} to get
\begin{equation}
	\TV{P^{rk + d}(0, \cdot) - U} \leq \frac{1}{4} \left( e^{m_0 m_1 k \gamma^{r}} - 1 \right).
\end{equation}
Thus for all $s > 0$ and $r \geq  \frac{\log (m_0 m_1 k)}{\log (\gamma^{-1})} +s$ we obtain 
\[ 
	\TV{P^{rk' + d}(0, \cdot) - U} \leq \frac{1}{4} \left( e^{\gamma^{s}} - 1 \right).
\] 
Since $k = O(\log n)$, this yields the result.

\section{Necessary condition for irreducibility and aperiodicity}\label{section:irr_aper}

In this section we prove that the condition on the subgroup $H$ given in Theorems \ref{thm:upperbound} and \ref{thm:almostall} is necessary to ensure irreducibility and aperiodicity of the random walk. Proposition \ref{prop:cns_aper} and its proof should be reminiscent of the following result for convolution random walks on groups.  

For a random walk on a finite group $\Gamma$ defined as the product of iid random increments of law $\mu$, irreducibility is equivalent to the fact that the subgroup generated by $\supp \mu$ is the whole group $\Gamma$. Furthermore if it is irreducible, it is aperiodic if and only if $\supp \mu$ is not contained in the coset of a proper normal subgroup, which is also equivalent to (see for example \cite[p.97]{mukherjea2006measures})
\[
\bigcup_{k \geq 0} (\supp \mu)^{-k} (\supp \mu)^{k} = \Gamma,
\]
where for all subset $K \subseteq \Gamma$ and $l \geq 0$, $K^{l} := \{ x_1 \cdots x_k,  \forall i \ x_i \in K \}$, $K^{-1} := \{x^{-1}, x \in K \}$ and $K^{-l} := (K^{-1})^{l}$. In what follows, additive notation is used for group operation.

\begin{prop}\label{prop:cns_aper}
	Let $H$ be the smallest $A$-invariant subgroup of $\bZ^{d}$ that contains $\supp \mu - \supp \mu$. Suppose it is generated by $(a_i u _i)_{i=1}^{l}$ for some basis $(u_i)_{i=1}^{d}$ of $\bZ^{d}$ and integers $(a_i)_{i=1}^{l}$.  If the random walk $X_{t}$ is irreducible and aperiodic then necessarily $l = d$ and $n$ is coprime with all the $a_i$.
\end{prop}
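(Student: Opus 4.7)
The plan is to show that the affine structure confines the walk to a union of cosets of the image of $H$ in $(\Zn)^d$, on which the induced dynamics is deterministic; irreducibility and aperiodicity will then rule out any non-trivial coset structure, which translates directly into the arithmetic conditions claimed.

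First, I would fix any $x_0 \in \supp \mu$, so that $\supp \mu \subseteq x_0 + H$ by definition of $H \supseteq \supp \mu - \supp \mu$. Writing each increment as $B_t = x_0 + h_t$ with $h_t \in H$ and using that $H$ is $A$-invariant, so $A^j H \subseteq H$ for every $j \geq 0$, the recursion yields
\[
X_t \equiv s_t + \sum_{j=0}^{t-1} A^{j} h_{t-j} \pmod{n}, \qquad \text{where } s_t := \sum_{j=0}^{t-1} A^j x_0 \in \bZ^d .
\]
Hence $X_t$ almost surely lies in the coset $s_t + H_n$, where $H_n \subseteq (\Zn)^d$ denotes the image of $H$. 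Passing to the quotient $G := (\Zn)^d / H_n$, the projection $\bar X_t$ equals the deterministic sequence $\bar s_t$.

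Next, I would observe that $\bar s_t$ is the orbit of $0$ under the affine map $T \colon \bar y \mapsto \bar A \bar y + \bar{x}_0$ on $G$, which is well defined since $A$ preserves $H$ hence $H_n$, and bijective since $\det A = \pm 1$ makes $A$ a bijection on $(\Zn)^d$. Because $T$ is a bijection of the finite group $G$, its orbit at $0$ is strictly periodic: there is a minimal $T^\star \geq 1$ with $\bar s_{T^\star} = 0$, and $\{t \geq 1 : \bar s_t = 0\} = T^\star \bN$. Since $X_t = 0$ forces $\bar X_t = 0$, every return time of $X_t$ to $0$ lies in $T^\star \bN$, and aperiodicity of $X_t$ therefore forces $T^\star = 1$, i.e.\ $\bar{x}_0 = 0$, i.e.\ $x_0 \in H_n$, so that $\supp \mu \subseteq H_n$ modulo $n$. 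Since all increments are then in $H_n$, the walk stays in $H_n$, and irreducibility forces $H_n = (\Zn)^d$.

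Finally, I would translate $H_n = (\Zn)^d$ into the arithmetic statement via the basis $(u_i)_{i=1}^d$ of $\bZ^d$. The change-of-basis matrix lies in $\GL_d(\bZ)$ and so induces an automorphism of $(\Zn)^d$; in the new coordinates $H = \bigoplus_{i=1}^l a_i \bZ e_i$, hence $H_n = \bigoplus_{i=1}^l (a_i \bZ / n\bZ) e_i \oplus \{0\}^{d-l}$. Equality with $(\Zn)^d$ forces simultaneously $l = d$ (to fill the last $d - l$ zero components) and $a_i \bZ / n\bZ = \bZ / n\bZ$ for every $i$, i.e.\ $\gcd(a_i, n) = 1$. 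The only point requiring any care is checking that $T$ is well defined and bijective on the quotient, which reduces precisely to the $A$-invariance of $H$ and to $A \in \GL_d(\bZ)$; everything else is elementary.
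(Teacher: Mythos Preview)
Your proof is correct and rests on the same core observation as the paper's: at each time $t$ the walk is confined to a single coset of $H_n := H \bmod n$, so irreducibility and aperiodicity together force $H_n = (\Zn)^d$. The paper reaches this conclusion via an intermediate structural result, namely that $H_n = \bigcup_{k \geq 0}(\supp P^k - \supp P^k)$, which requires showing this union is a subgroup by exploiting the periodicity of the sequence $(A^k \mu)_{k \geq 0}$ on $\bT_n$; only then does it argue (rather tersely) that a nontrivial $H_n$ obstructs irreducibility and aperiodicity. You bypass that characterization entirely by passing straight to the quotient $G = (\Zn)^d / H_n$, where the projected walk becomes the deterministic orbit of an affine bijection $T$: aperiodicity forces the orbit period $T^\star$ to equal $1$, hence $\bar x_0 = 0$, and then irreducibility forces $G$ to be trivial. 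Your route is shorter and has the virtue of cleanly separating the roles of the two hypotheses; the paper's description of $H_n$ via supports is a nice side fact but not actually needed for the proposition.
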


\begin{proof}

	The subgroup $H$ is described by \eqref{eq:subgroupH}. Let $P^{t}$ denote the law of $X_t$ at time $t$. Consider the subgroup $N := H \mod n$ of $\bT_{n}$ defined as the image of $H$ under the natural projection. We claim the following: 
	\begin{claim}
		\[ 
		N = \bigcup_{k \geq 0} \supp P^{k} - \supp P^{k}  
		\] 
	\end{claim}
	\begin{claim}
		If $X_t$ is irreducible and aperiodic then $N = \bT_n$.
	\end{claim} 
	From the second claim and Lemma \ref{lem:abelian_subgroups} we easily deduce the result. Consider a basis $(u_i)_{i=1}^{d}$ of $\bZ^{d}$ and integers $a_1, \ldots, a_l$ such that $(a_i u_i)_{i=1}^{l}$ forms a basis of $H$. Then $N$ consists of the projections on $\bT_n$ of all linear combinations of the $a_i u_i$ with coefficients in $[0,n-1]$. In particular $N$ has at most $n^{l}$ points, which proves the necessity of $l = d$. 
	
	Suppose now $l = d$. The basis $(u_i)_{i=1}^{d}$ actually yields an isomorphism between $N$ and the subgroup of $\bT_n$ generated by elements $(0, \ldots, 0, a_i, 0, \ldots, 0)$ where $a_i$ is in position $i$. As $a_i$ generates $\bZ /  n \bZ$ if and only if $n$ is coprime with $a_i$, we deduce $N =\bT_n$ if and only if $n$ is coprime with all the $a_i$, which is the desired result.  
	
	Let us now prove the claims. For all $k \geq 0$, let $\mu_{k} := A^{k} \mu$. $N$ is by definition the subgroup generated by $\bigcup_{k \geq 0} \supp \mu_k - \supp \mu_k$.
	For $k \geq 1$, the random walk can be written
	\[ 
	X_{t} = \sum_{i=1}^{k} A^{k-i} g_i \mod n
	\]
	with $g_{i} \in \supp \mu$ for all $i=1, \ldots, k$, so $P^{k} = \mu_0 \ast \cdots \ast \mu_{k-1}$, from which we deduce $\supp \ P^{k} - \supp \ P^{k} \subseteq N$ for all $k \geq 0$.
	
	Conversely, notice that $0 \in \supp P^k - \supp P^k$ for all $k \geq 0$. Thus, given $x_k - y_k \in \supp \mu_k - \supp \mu_k$, we can choose $x_i = y_i \in \supp \ \mu_{i}$ for $i = 0, \ldots, k-1$ and write $x_k - y_k = x_k -y_k + \sum_{i \leq k-1} (x_i - y_i) $ to obtain that $\supp \mu_k - \supp \mu_k \subseteq \supp P_k - \supp P_k$. Since $N$ is generated by the $\supp \mu_k - \supp \mu_k$, it suffices to prove that the set $\bigcup_{k \geq 0} \supp \ P^{k} - \supp \ P^k$ is a group.
	
	Stability under inverse is clear, whereas stability under addition will be derived from of the following observation: since $\det A = \pm 1$, $A$ induces an automorphism of the finite group $\bT_{n}$, a power of which is the identity. Consequently, the sequence of image measures $(\mu_k)_{k \geq 0}$ is periodic, say of period $m$. 
	Now for $l \geq 0$, using the trick that $0 \in \supp \ \mu_l - \supp \mu_l$ for all $l \geq 0$, we see that $\supp \ P^k - \supp \ P^{k} \subseteq \supp \ P^{l} - \supp \ P^{l}$ for all $l \geq k$. We can consider in particular $l = - 1 \mod m$, so that for all $k' \geq 0$, $P^{l+k'} = P^{l} \ast \mu^{0} \ast \cdots \ast \mu^{k'-1} = P^{l} \ast P^{k'}$. All in all, this shows that $\supp \ P^{k} - \supp \ P^{k} + \supp \ P^{k'} - \supp \ P^{k'} \subseteq \supp \: P^{l+k'} - \supp P^{l+k'}$, which is the desired addition property.
	
	Finally, the second claim is easily deduced from the first.
	Consider any sequence $(x_t)_{t \geq 0}$ such that $x_t \in \supp P^{t}$ for all $t \geq 0$. Then by the characterization of $N$, for all $t \geq 0$, $X_t$ is included in the coset $x_t + N$. Hence if $N \neq \bT_{n}$ the random walk cannot be irreducible and aperiodic. 
	
\end{proof}

\section{An upper Bound for almost all $n$: proof of Theorem \ref{thm:almostall}}\label{section:almostall}

In this section we reuse the results of Section \ref{section:upper}. The proof follows the same argumentation as in \cite{chung1987random, hildebrand1996random}.  For all $n \geq 1$, let $U_n$ be the uniform measure on $\bT_n$ and $P_{n}^{t}$ the law at time $t$ of the random walk on $\bT_n$ defined by $\eqref{eq:def_RW}$. 

Fix an integer $k \geq 2$, $t = \Omega(k)$ and $a_i$ be defined as in Lemma \ref{lem:unstable_lower_bound}. We will consider at once all integers $n \in N_k := \{ c \tlambda^{k-1} \leq m < c \tlambda^{k} \text{, $m$ coprime with all $a_i$} \}$, where $c > 0$ is some constant which can be explicited and $\tlambda = \lambda^{-1/c_2}$. The choice of $N_k$ (and thus of $c)$ is made so that 
\[ 
	\ec \lambda^{k} < \frac{c_1}{n^{c_2}} \leq \ec \lambda^{k-1}
\] 
whenever $n \in N_k$, $c_1, c_2$ being the constants appearing in Lemma \ref{lem:unstable_lower_bound}.
In the sequel, $n$ is implicitely taken in this set.  By the upper bound Lemma \eqref{eq:upperboundlemma} and \eqref{eq:majoration_fourier}, we can upper bound $\sum_{n} \TV{P_{n}^{t+d} - U_n}$ by
\[ 
	S := \sum_{n} \sum_{\rho \in \bT_{n} \smallsetminus \{ 0 \}} \prod_{j=0}^{t-1} f((A^{\top})^{j} \rho / n))
\]
Since $n$ is not fixed anymore, it may happen that $\rho / n = \rho' / n'$ for distinct pairs $(\rho,n), (\rho',n')$. We regroup these terms as follows. 

Given $r \in \bN^{d}$ a vector with integer entries and an integer $s > 0$, define $\gcd(r,s) $ as $\gcd(r_{1}, \ldots, r_d, s)$. Any vector $\rho / n$ can be rewritten uniquely under the form $r / s$ with $\gcd(r,s) = 1$. Furthermore $s$ is necessarily coprime with the $a_i$ if $n$ is. Given $r$ and $s$ such that $\gcd(r,s) = 1$, let $M(r/s)$ be the number of pairs $(\rho, n)$ such that $\rho / n = r /s$. Such $\rho / n$ are obtained simply by multiplying numerators and denominators of $r/s$ by an integer factor necessarily smaller than $c \tlambda^{k} / s$, hence $M(r/s) \leq c \tlambda^{k} / s$. Then rewrite the sum as 
\begin{align*}
	S &= \sum_{1 \leq s < c \tlambda^{k}} \sum_{\substack{r \in \bT_{s} \smallsetminus \{ 0 \} \\ \gcd(r,s) = 1}} M(r/s) \prod_{j=0}^{t-1} f((A^{\top})^{j} r / s) \\
	&= \sum_{l = 1}^{k} \sum_{c \tlambda^{l-1} \leq s < c \tlambda^{l}} \sum_{\substack{r \in \bT_{s} \smallsetminus \{ 0 \} \\ \gcd(r,s) = 1}} M(r/s) \prod_{j=0}^{t-1} f((A^{\top})^{j} r / s) \\
	&\leq c \tlambda^{k} \sum_{l = 1}^{k} \sum_{c \tlambda^{l-1} \leq s < c \tlambda^{l}} \sum_{\substack{r \in \bT_{s} \smallsetminus \{ 0 \} \\ \gcd(r,s) = 1}} \frac{1}{s} \prod_{j=0}^{t-1} f((A^{\top})^{j} r / s).
\end{align*}
As we did for $n$, the integers $s$ considered will now be implicitely assumed to be coprime with the $a_i$.

For every pair $(r,s)$ the product can be bounded by $\gamma^{g(r/s)}$, where we recall $g(r/s)$ is the maximal number of rectangles in $\cR_1$ appearing in the $t$ first letters of some symbolic representation of $r/s$. What follows now is the same application of Lemma \ref{lem:blocks} as we did in Section \ref{section:upper} but with $l$ and $s$ in place of respectively $k$ and $n$.
Define for all $l \geq 1$, 
\[ 
	Q_l := \left\{ (r,s): c \tlambda^{l-1} \leq s < c \tlambda^{l}, r \in \bT_s \smallsetminus \{ 0\}, \gcd(r,s) = \gcd(s,a_i) = 1 \right\}.
\]
Then
\[ 
	S \leq c \tlambda^{k} \sum_{l=1}^{k} \sum_{(r,s) \in Q_l} \frac{1}{s} \gamma^{g(r/s)}.
\]

Decompose the symbolic representations of $r/s$ into blocks of $l_{1} = C l$ letters and set $t' = \lfloor t / l_{1} \rfloor$. Since $s$ is coprime with the $a_i$, we can apply Lemma \ref{lem:blocks} with $s$ instead of $n$ basically in the same way. The only difference concerns point (ii), as the distance between distincts points is now lower bounded by
\[ 
d(r/s, r'/s') \geq \frac{c'}{s s'}
\]
for some constant $c'$. Arguing as in the proof of Lemma \ref{lem:unstable_lower_bound}, this implies a lower bound on the unstable component $v_u$ of $r/s - r'/s'$, namely $\norm{v_u} \geq c' / (ss')^{c_2}$ for some possibly different constant $c'$. The definition of $Q_l$ is made so that $(r,s) \in Q_l$ implies $\ec \lambda^{l} < c_1 / s^{c_2}$. Thus choosing $C$ large enough in the definition of blocks, one has $\ec \lambda^{l_1} < c_1 / (ss')^{c_2}$ so by Proposition \ref{prop:expansiveness} symbolic representations of $r/s, r'/s'$ must have distinct blocks.

Then use the interchange inequality \eqref{eq:interchange} to obtain
\begin{align*}
	S \leq c \tlambda^{k} \sum_{l=1}^{k} \sum_{(r,s) \in Q_l} \frac{1}{s} \gamma^{t' g(B_{1}(r/s))} .
\end{align*}
For $(r,s) \in Q_l, \frac{1}{s} \leq c^{-1} \tlambda^{-l+1} = O(\tlambda^{-l})$. On the other hand, we can upper bound the sum over $Q_l$ by the sum over all blocks of length $l_{1}$, thus
\begin{align*}
	\sum_{(r,s) \in Q_l} \frac{1}{s} \gamma^{t' g(B_{1}(r/s))} &\leq \tlambda^{-l} \sum_{j=1}^{l_{1}} \binom{l_{1}}{j} \gamma^{t' j} (m_0 m_1)^{j} \\
	&= \tlambda^{-l} \left( (1 + m_0 m_1 \gamma^{t'})^{l_{1}} - 1 \right).
\end{align*}

We now consider two different regimes for $l$.

If $l \leq k / \log k$, then $t' \rightarrow \infty $ as $k \rightarrow \infty$. Now for large $t'$ we can use Taylor's theorem to bound $(1 + m_0 m_1 \gamma^{t'})^{l_{1}} - 1$ by $O(l_{1} \gamma^{t'})$. As $l_1 = C \log l$ this yields
\begin{align*}
	\sum_{l \leq k / \log k} \tlambda^{-l} \left( (1 + m_0 m_1 \gamma^{t'})^{l_{1}} - 1 \right) &\leq O \left( \sum_{l \leq k / \log k} \tlambda^{-l} l \ \gamma^{t/(Cl)} \right) \\
	&\leq O \left( \sum_{l \geq 0} \tlambda^{-l} l \  \gamma^{t \log k/(C k)} \right)\\
	&\leq O(\gamma^{t \log k / (C k)} )
\end{align*}
so for $t \geq C' k$ with a sufficiently large constant $C'$,  $S_1 \rightarrow 0$ as $k \rightarrow \infty$. 

On the other hand, for $k / \log k < l \leq k$, choose $t \geq \tC k$ with $\tC > 0$ large enough so that $\tgamma := \tlambda^{-1} (1+m_0 m_1 \gamma^{\tC})^{C} < 1$. Then as $t' \geq t / l \geq \tC$ we can bound 
\begin{align*}
	\sum_{k / \log k < l \leq k} \tlambda^{-l} \left( (1 + m_0 m_1 \gamma^{t'})^{l_{1}} - 1 \right) &\leq k \max_{k / \log k < l \leq k} \tlambda^{-l}  \left( (1 + m_0 m_1 \gamma^{\tC})^{Cl} -1 \right) \\
	&\leq k \max_{k / \log k < l \leq k} \left(\tlambda^{-1} (1+ m_0 m_1 \gamma^{\tC})^C \right)^{l} \\
	&= O(\tgamma^{k / \log k}).
\end{align*}

All in all, we have proved there exists a constant $C'$ such that for large enough $k$ and $t = C' k$, 
\[ 
	\sum_{n \in N_k} \TV{P_{n}^{t+d} - U_n} \leq S \leq o(\tlambda^{k}).
\]
As a consequence the number of integers $n$ in the interval $N_k$ for which the random walk has not mixed by time $t+d$ is $o(\tlambda^{k})$. 

On the other hand, for every integer $a \geq 1$, the number of multiples of $a$ in the interval $[c \tlambda^{k-1}, c \tlambda^{k}) $ is of order $O(\tlambda^{k-1})$. Consequently it must contain $\Omega(\tlambda^{k})$ integers coprime with all $a_i$, hence $\abs{N_k} \geq \Omega(\tlambda^{k})$. Ultimately this shows that the fraction of integers in $N_k$ for which the random walk has mixed at time $C' k$ can be made arbitrarily large by choosing $k$ large enough. 

\section{Markov partitions}\label{section:markov}

In this section we define Markov partitions and prove Propositions \ref{prop:existence_partitions}, \ref{prop:symbolic_twosided} and \ref{prop:symbolic_periodic}. The first use of Markov constructions for toral automorphisms goes back to the work of Berg \cite{berg1967conjugacy} and Adler and Weiss \cite{adler1967entropy} for dimension 2 . Since, many other constructions, with different degrees of explicitness, have been proposed for dynamical systems more general than automorphisms of the torus. This section is based on the general construction for Axiom A diffeomorphisms by Bowen \cite{bowen2008equilibrium}; see also \cite{benoist2002systemes}. For the reader interested in more explicit partitions, relating to arithmetic properties of the map $A$, see \cite{sidorov2003arithmetic, vershik1992arithmetic}. We note that these partitions are related to the $\beta$-ary expansions used in \cite{klyachko2020random}, but do not apply in the same generality as the Bowen construction.

\subsection{Hyperbolic dynamics}

We identify $A$ with the induced map on $\bT^{d}$. Recall $p$ denotes the natural projection $\bR^{d} \rightarrow \bT^{d}$, and that $d$ is the quotient metric induced by a norm $\norm{\cdot}$ on $\bR^{d}$ adapted to $A$.

For $x \in \bT^{d}$ and $\epsilon > 0$ define
\begin{equation}\label{eq:W_stable}
	\begin{split}
		&W^{s}(x) = \left\{ y \in \bT^{d}, \ d(A^n x, A^n y) \rightarrow 0 \right\} \\
		&\Wseps(x) = \left\{ y \in \bT^{d}, \ d(A^n x, A^n y) \leq \epsilon \:\forall n \geq 0 \right\} \\
		&W^{u}(x) = \left\{ y \in \bT^{d}, \ d(A^{-n} x, A^{-n} y) \rightarrow 0 \right\} \\
		&\Wueps(x) = \left\{ y \in \bT^{d}, \ d(A^{-n} x, A^{-n} y) \leq \epsilon \: \forall n \geq 0 \right\}.
	\end{split}
\end{equation}

$W^{s}(x)$, resp. $W^{u}(x)$ are the stable, resp. unstable manifold going through $x$. $\Wseps(x)$, resp. $\Wueps(x)$ are the local stable, resp. unstable manifold going through $x$. 

They can be described as: $W^s(x) = p(x + E_s)$, $W^{u}(x) = p(x + E_u)$ and $\Wseps(x) = p(B(x, \epsilon) \cap (x + E_s))$, $\Wueps(x) = p(B(x, \epsilon) \cap (x + E_u))$ where we recall $B(x, \epsilon)$ denotes the ball of radius $\epsilon$ and center $x$ in $\bR^{d}$.
The previous definitions readily imply the following.

\begin{lem}\label{lem:local_product}
	For all sufficiently small $\epsilon > 0$ such that $p$ is injective on $B(0, \epsilon)$, if $d(x,y) \leq \epsilon$ then $\Wseps(x) \cap \Wueps(y)$ consists of a single point, denoted $[x,y]$. The map $(x,y) \mapsto [x,y]$ is continuous.
\end{lem}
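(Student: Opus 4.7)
\textbf{Proof plan for Lemma \ref{lem:local_product}.}

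The plan is to construct $[x,y]$ explicitly by lifting to $\bR^d$, splitting the displacement $\hat{y}-\hat{x}$ along the invariant splitting $E_s \oplus E_u$, and then showing both existence and uniqueness of the intersection point using the fact that the adapted norm satisfies $\norm{v}=\max(\norm{v_s},\norm{v_u})$ together with local injectivity of $p$.

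First I would fix $\epsilon>0$ small enough that $p$ is injective on $B(0,3\epsilon)$ (for instance). Given $x,y \in \bT^d$ with $d(x,y)\leq \epsilon$, choose lifts $\hat{x},\hat{y}\in\bR^d$ with $\norm{\hat{x}-\hat{y}}\leq \epsilon$; this is possible because the infimum in the definition of the quotient metric is attained for $\epsilon$ smaller than the injectivity radius of $p$. Decompose $\hat{y}-\hat{x}=v_s+v_u$ with $v_s\in E_s$, $v_u\in E_u$. Because the norm is adapted, $\norm{v_s}\leq \epsilon$ and $\norm{v_u}\leq \epsilon$. Set
\[
 [x,y]:=p(\hat{x}+v_s)=p(\hat{y}-v_u).
\]
Then $\hat{x}+v_s\in (\hat{x}+E_s)\cap B(\hat{x},\epsilon)$ and $\hat{y}-v_u\in (\hat{y}+E_u)\cap B(\hat{y},\epsilon)$, so using the description $\Wseps(x)=p(B(\hat{x},\epsilon)\cap(\hat{x}+E_s))$ and analogously for $\Wueps$, the point $[x,y]$ lies in $\Wseps(x)\cap\Wueps(y)$.

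For uniqueness, suppose $q\in \Wseps(x)\cap\Wueps(y)$. Write $q=p(\hat{x}+a)=p(\hat{y}+b)$ with $a\in E_s$, $b\in E_u$, $\norm{a},\norm{b}\leq\epsilon$. Then $(\hat{x}+a)-(\hat{y}+b)\in\bZ^d$ and has norm at most $3\epsilon$, so by the choice of $\epsilon$ it must be $0$. Thus $a-b=\hat{y}-\hat{x}=v_s+v_u$, and uniqueness of the $E_s\oplus E_u$ decomposition forces $a=v_s$ and $b=-v_u$, hence $q=[x,y]$.

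Continuity is then immediate: for $(x',y')$ sufficiently close to $(x,y)$, one can choose lifts $\hat{x}',\hat{y}'$ depending continuously on $(x',y')$ (locally invert $p$), the projection onto $E_s$ is a continuous linear map, and $p$ itself is continuous, so $[x',y']=p(\hat{x}'+v_s')$ depends continuously on $(x',y')$. The only subtle step is the uniqueness argument, which forces the choice of the constant $\epsilon$ (the $3\epsilon$ bound) and is where the hyperbolic splitting $\bR^d=E_s\oplus E_u$ via Cayley--Hamilton is actually used; the rest is essentially formal.
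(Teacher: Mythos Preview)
Your proof is correct and is exactly the argument the paper has in mind: the paper does not actually give a proof of this lemma, stating only that ``the previous definitions readily imply the following,'' and your construction via lifting, splitting along $E_s\oplus E_u$, and using injectivity of $p$ is precisely how one unpacks that remark. One minor point: with the adapted norm you can sharpen the $3\epsilon$ bound in the uniqueness step to $2\epsilon$ by writing $(\hat{x}+a)-(\hat{y}+b)=(a-v_s)-(b+v_u)$ and using $\norm{\cdot}=\max(\norm{\cdot_s},\norm{\cdot_u})$, which brings the required smallness of $\epsilon$ closer to the hypothesis as literally stated, but this is cosmetic.
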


The construction of Markov partitions that we will present is quite general, for it is essentially based on compactness arguments combined with the following property, called shadowing of orbits. 

\begin{Def}
	Given $\alpha > 0$, a sequence $(x_k)_{k \in \bZ}$ is called an $\alpha$-pseudo-orbit if for all $k \in \bZ$, $d(A x_k, x_{k+1}) < \alpha$.
	Given $\beta > 0$, a point $x \in \bT^{d}$ $\beta-$ shadows the pseudo-orbit $(x_k)_{k \in \bZ}$ if for all $k \in \bZ$, $d(x_k, A^{k} x) < \beta$.
\end{Def}

\begin{prop}\label{prop:shadowing}
	For all $\beta > 0$ small enough, there exists $\alpha > 0$ such that every $\alpha$-pseudo-orbit $(x_k)_k$ is $\beta$-shadowed by a unique point $x$ of $\bT^{d}$.
\end{prop}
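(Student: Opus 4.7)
The plan is to handle uniqueness and existence separately. Uniqueness is immediate from expansiveness: if both $x$ and $y$ in $\bT^d$ are $\beta$-shadows of the same pseudo-orbit $(x_k)$, then for every $k \in \bZ$ the triangle inequality gives
\[
d(A^k x, A^k y) \leq d(A^k x, x_k) + d(x_k, A^k y) < 2\beta,
\]
so taking $\beta < \ec/2$ Corollary \ref{coroll:expansiveness} forces $x = y$.

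For existence I would reduce the problem to a linear cohomological equation in $\bR^d$. Pick $\alpha$ smaller than the injectivity radius of $p$, choose any lift $\hat x_0 \in p^{-1}(x_0)$, and inductively pick $\hat x_{k+1} \in p^{-1}(x_{k+1})$ to be the unique representative in $B(A \hat x_k, \alpha)$, iterating in both directions. This yields a lift $(\hat x_k)_{k \in \bZ}$ with $\epsilon_k := \hat x_{k+1} - A \hat x_k$ satisfying $\|\epsilon_k\| < \alpha$. A bona fide $A$-orbit that shadows this lift is a sequence of the form $\hat y_k = \hat x_k + v_k$ with $\hat y_{k+1} = A \hat y_k$, which rewrites as the affine recursion
\[
v_{k+1} = A v_k - \epsilon_k, \quad k \in \bZ.
\]
If such $(v_k)$ can be produced with $\sup_k \|v_k\| < \beta$, then $x := p(\hat x_0 + v_0)$ satisfies $d(A^k x, x_k) \leq \|v_k\| < \beta$ and is the desired shadowing point.

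To solve this recursion I would use the hyperbolic splitting $\bR^d = E_s \oplus E_u$ together with the adapted norm constructed in Subsection \ref{subsec:expansiveness}. Writing $\epsilon_k = \epsilon_k^s + \epsilon_k^u$ and seeking $v_k = v_k^s + v_k^u$, the equation decouples into one copy in each subspace. Since $A$ contracts $E_s$ with rate $\lambda < 1$, the unique bounded solution on $E_s$ is the explicit forward sum
\[
v_k^s := - \sum_{j < k} A^{k-1-j}\big|_{E_s} \epsilon_j^s,
\]
whose norm is bounded by $\alpha \sum_{m \geq 0} \lambda^m = \alpha/(1-\lambda)$. Dually, $A^{-1}$ contracts $E_u$ with rate $\lambda$, so the unique bounded solution on $E_u$ is the backward sum
\[
v_k^u := \sum_{j \geq k} A^{k-1-j}\big|_{E_u} \epsilon_j^u,
\]
bounded by $\alpha \lambda/(1-\lambda)$. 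By the adapted norm property $\|v_k\| = \max(\|v_k^s\|, \|v_k^u\|) \leq \alpha/(1-\lambda)$, so choosing $\alpha < (1-\lambda)\beta$ completes the construction. The only technical subtlety I anticipate is purely at the interface between $\bR^d$ and $\bT^d$: one must verify that the lifts $\hat x_k$ are unambiguously defined and that norm estimates upstairs translate into torus distances downstairs, which is exactly what forces $\alpha$ and $\beta$ to stay below the injectivity scale of $p$.
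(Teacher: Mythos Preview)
Your proof is correct and follows essentially the same approach as the paper: both lift the pseudo-orbit to $\bR^{d}$, exploit the hyperbolic splitting $E_s \oplus E_u$ with the adapted norm, and obtain the shadowing point via geometric series with the same quantitative bound $\alpha/(1-\lambda)$. The only cosmetic difference is that the paper decomposes the lifted points $x_k = s_k + u_k$ and constructs the stable part of the shadow as the limit $s=\lim_k S^{k}s_{-k}$, whereas you decompose the defects $\epsilon_k=\hat x_{k+1}-A\hat x_k$ and solve the difference equation $v_{k+1}=Av_k-\epsilon_k$ explicitly; unwinding either computation yields the other.
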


\begin{proof}
	Let $\beta > 0$. Uniqueness is provided by expansiveness (Proposition \ref{prop:expansiveness}), provided $\beta \leq \ec$. For the existence, it suffices to prove the analog result in $\bR^{d}$: if $(x_k)_{k \in \bZ}$ an $\alpha$-pseudo-orbit in $\bR^{d}$, then it can be $\beta$-shadowed by a true orbit, provided $\alpha$ is small enough. Indeed, lifting a pseudo-orbit of the torus to $\bR^{d}$, then we can project the shadowing orbit back to $\bT^{d}$ to prove the statement for the torus.
	
	Decompose $x_k = s_{k} + u_{k}$ with $s_{k} \in E_s, u_{k} \in E_u$ for all $k \in \bZ$. Since the metric is adapted to $A$, it is easily seen that the sequences $(s_k)_{k \in \bZ}$ and $(u_k)_{k \in \bZ}$ are both $\alpha$-pseudo-orbits. Write $S = A|_{E_s}$ and $T = A^{-1}|_{E_u}$. 
	Since $(s_k)_{k \in \bZ}$ is an $\alpha$-pseudo-orbit
	\[ 
		\norm{S^{k+1} s_{-(k+1)} - S^{k} s_{-k}} \leq \norm{S^{k}} \alpha \leq \lambda^{k} \alpha
	\]
	hence the sequence $(S^{k} s_{-k})_{k \in \bN}$ is a Cauchy sequence and we can define $s := \lim_{k \rightarrow \infty} S^{k} x_{-k}$. Then for all $m \geq 1 - k$,
	\begin{align*}
		\norm{s_k - S^{k} s} &\leq \sum_{i=0}^{k+m-1} \norm{S^{i} s_{k-i} - S^{i+1} s_{k-i-1}} + \norm{S^{k+m}(s_{-m}) - S^{k} s} \\
		&\leq \sum_{i=0}^{k+m-1} \lambda^{i} \alpha + \lambda^{k} \norm{S^{m} s_{-m} - s} \\
		&\leq \frac{\alpha}{1-\lambda}.
	\end{align*}
	by taking $m \rightarrow \infty$. 
	
	Repeating the argument with $T$, the sequence $(T^{k} u_{-k})_{k \in \bN}$ converges to a point $t \in \bR^{d}$ so that $\norm{u_k - T^{k} u} \leq \alpha /(1- \lambda)$. Since the metric is adapted,  $x := s + u$ $\beta$-shadows the pseudo orbit $(x_k)_{k \in \bZ}$ for $\beta = \alpha /(1- \lambda)$.
\end{proof}

\subsection{Markov Partitions}

Let $\epsilon > 0$ be sufficiently small so that the conclusion of Lemma \ref{lem:local_product} holds.

\begin{Def}
	A set $R \subset \bT^{d}$ is called a rectangle if it has diameter at most $\epsilon$ and, for all $x,y \in R$, $[x,y] \in R$. 
	A rectangle is said to be proper if $R = \overline{\mathrm{int} (R)}$. 
\end{Def}

Given a rectangle $R$ and $x \in R$, let
\begin{align*}
	W^s(x,R) := \Wseps(x) \cap R \qquad W^{u}(x,R) := \Wueps(x) \cap R .
\end{align*}

\begin{Def}
	A Markov partition is a finite covering $\cR = \{ R_1, \ldots, R_m \}$ of $\bT^{d}$ by proper rectangles such that 
	\begin{enumerate}[label=(\roman*)]
		\item $\mathrm{int} (R_i) \cap \mathrm{int} (R_j) = \emptyset$ for all $i \neq j$
		\item for all $x \in \mathrm{int}(R_i)$ if $Ax \in \mathrm{int}(R_j)$, then
		\begin{equation}
			A \left( W^s(x,R_i) \right) \subseteq W^s(Ax, R_j) \quad \text{and} \quad A \left( W^u(x,R_i) \right) \supseteq W^u(Ax, R_j).
		\end{equation}
	\end{enumerate}
\end{Def}

Notice that the second part of (ii) above is equivalent to 
\[ 
A^{-1} \left( W^u(x,R_i) \right) \subseteq W^u(A^{-1}(x), R_j).
\]
It is thus a dual statement of the first part obtained by replacing $A$ with $A^{-1}$, which has the effect of exchanging stable and unstable directions. 

Condition (ii) is really the Markov property of the partition. It ensures that if $\mathrm{int} R_i \cap A^{-1} \mathrm{int} R_j \neq \emptyset$ and $\mathrm{int} R_j \cap A^{-1} \mathrm{int} R_k \neq \emptyset$ then $\mathrm{int} R_i \cap A^{-1} \mathrm{int} R_j \cap \mathrm{int} A^{-2} R_k \neq \emptyset$: if $x, y$ are respectively in the first and second intersection, consider $A^{-1}[y,Ax]$.

We have all the tools for the proofs. We give all the essential arguments, but only sketch the technical details. We refer to \cite{bowen2008equilibrium} for a detailed proof, which extends to the general case of Axiom A diffeomorphisms.

\begin{proof}[Proof of Proposition \ref{prop:symbolic_periodic}]
	
	(i) Let $R$ be a rectangle and consider the compact set $U := \{ \omega \in \Omega \ | \ \omega_0 = R\}$. Since rectangles of a Markov partition are proper and of disjoint interior, one has $\pi^{-1}(\mathrm{int} R)  \subset U$. Because $\pi$ is surjective we deduce $\mathrm{int} R \subset \pi(U)$. Finally continuity implies $\pi(U)$ is compact, hence $R \subset \pi(U)$.
	
	(ii) Recall that $\pi$ is finite-to-one. Thus if $x$ is periodic of period $k$, the shift $\theta$ induces a permutation of the finite set of symbolic representations $\pi^{-1} \left(\{x, Ax, \ldots A^{k-1}x\}\right)$. For the integer $m$ such that the permutation induced by $\theta^m$ is the identity, one gets exactly that all sequences in $\pi^{-1}(x)$ are $m$-periodic.
\end{proof}

\begin{proof}[Proof of Proposition \ref{prop:symbolic_twosided}]
	Given two non-empty rectangles $R, S$ of a Markov partition, $S$ will be called a stable, resp. unstable subrectangle of $R$ if $S \subset R$, $S$ is proper and for all $x \in S$, $W^{s}(x,S) = W^{s}(x,R)$, resp. $W^{u}(x,S) = W^{u}(x,R)$.
	
	Consider a Markov partition $\cR = \{R_{1}, \ldots, R_m \}$, $\cA = \cA(R)$ and $\Omega := \Omega_{\cA}$ the associated shift.
	
	If $S \subseteq R_i$ is a non-empty unstable subrectangle and $\cA_{ij} = 1$, Lemma 3.17 in \cite{bowen2008equilibrium} establishes that $A(S) \cap R_j$ is a non-empty unstable rectangle of $R_j$. Similarly, $A^{-1}(S) \cap R_{j}$ is a non-empty stable rectangle of $R_i$.
	
	Let $\omega \in \Omega$. By the previous statement, $R_{\omega_0} \cap A^{-1}( R_{\omega_1})$ is a non-empty stable subrectangle of $R_{\omega_{0}}$. By an immediate induction, every intersection $\bigcap_{k=0}^{l} A^{-k} R_{\omega_k}$ is a non-empty stable subrectangle of $R_{\omega_0}$. By compactness, $\bigcap_{k=0}^{\infty} A^{-k} R_{\omega_k}$ is then also a non-empty stable subrectangle. Then reiterate the argument with $A^{-1}$ to deduce that $\pi(\omega) := \bigcap_{k \in \bZ} A^{-k} R_{\omega_k}$ is non-empty. By construction, the orbits of two points in this intersection must remain at distance $\epsilon$ from each other. If $\epsilon$ is taken smaller than the expansiveness constant, the intersection is thus a singleton.
	
	Let $\pi (\omega)$ be the unique point of this singleton. From the construction, it is immediate that $\pi \circ \theta = A \circ \pi$. By what precedes the diameter of the intersection $\bigcap_{k=-l}^{l} A^{-k} R_{\omega_k}$ tends to $0$ as $l$ tends to infinity, which implies the continuity of the map $\pi$. 
	
	We prove surjectivity. Let $\partial \cR := \bigcup_{R \in \cR} \partial R$ denote the union of all boundaries of rectangles. Its complement $\mathrm{int} (\cR)$ is an open dense subset of $\bT^{d}$. If $x \in \bigcap_{k} A^{k} \mathrm{int}(\cR)$, $x$ is the image of any sequence $\omega \in \Omega$ such that $A^{k} x \in R_{\omega_k}$ for all $k$. By Baire's theorem, an intersection of open dense subsets is dense, hence the image of $\pi$ contains an open dense subset of $\bT^{d}$. However $\pi(\Omega)$ has to be compact by continuity, whence $\pi(\Omega) = \bT^{d}$.
	
%	Conversely if $x = \pi(\omega)$ is in $\bigcup_{k \in \bZ} A^{k} \mathrm{int}(\cR)$, then $A^{k} x \in \mathrm{int}(R_{\omega_k})$ for all $k \in \bZ$. Since distinct rectangles have disjoint interior, this implies uniqueness of $\omega$. Thus $\pi$ is injective over a dense subset.
	
	Finally, let us prove that any point of $\bT^{d}$ cannot have more than $m^{2}$ pre-images under $\pi$.  Suppose there exist $\omega^{(1)}, \ldots, \omega^{(m^{2}+1)} \in \Omega$ with the same image $x$. Then one can find $k_1 \leq k_2$ such that the indices between $k_1$ and $k_2$ of these sequences do not coincide. However there is at most $m^{2}$ possibilities when choosing the $k_1$-th and $k_2$-th index, therefore there exists among these sequences a pair $(\omega, \omega')$ such that $\omega_{k_1}=\omega'_{k_1}$, $\omega_{k_2} = \omega'_{k_2}$ and $\omega_{l} \neq \omega'_{l}$ for $k_1 < l < k_2$. 
	
	Now from the Markov property of the partition, the intersection $\bigcap_{k=k_1}^{k_2} A^{-k} \mathrm{int} R_{\omega_k}$ must be non-empty. Take $u$ in this open set and consider $\tau \in \pi^{-1}(u)$. Since $u$ is an interior point $\tau$ and $\omega$ coincide on indexes between $k_1$ and $k_2$. 
	
	On the other hand, since indexes $k_1$ and $k_2$ of $\omega$ and $\omega'$ coincide, one can build a new sequence $\tau'$ by replacing the block of $\tau$ between $k_1$ and $k_2$ by that of $\omega'$, that is we define
	\[ 
	\tau'_ k:=  \left\{ \begin{array}{l l}
		\tau_k & \text{for $k \leq k_1$ or $k \geq k_2$} \\
		\omega'_{k} & \text{if $k_1 \leq k \leq k_2$}
	\end{array} \right. .
	\]
Consider finally the point $v := \pi(\tau')$. By construction $A^{k} u$ and $A^{k} v$ are contained in the same rectangles for all $k \leq k_1$ and $k \geq k_2$, whereas for $k_1 \leq k \leq k_2$, they are contained in the rectangles $R_{\omega_k}$ and $R_{\omega'_{k}}$ respectively. However the latter both contain the point $A^{k} x$, so $A^{k} u ,A^{k}v$ are at distance at most $2 \eta$ from each other by triangle inequality. Thus $d(A^{k}u, A^{k}v) \leq 2 \eta$ for all $k$. Consequently if $2 \eta \leq \ec$ expansiveness ensures that $A^{l} u \in W^{s}(A^{l},R_{\omega_{l}})$. In the end we obtain $A^{l} u \in \mathrm{int} R_{\omega_l} \cap R_{\omega'_{l}}$, which contradicts the fact that rectangles have disjoint interior.
\end{proof}

\begin{proof}[Proof of Proposition \ref{prop:existence_partitions}]
	Let $\beta > 0$ and consider $\eta > 0 $ such that all $2 \eta (1 + \norm{A})$-pseudo-orbits are $\beta$-shadowed by a true orbit (Proposition \ref{prop:shadowing}). Consider a finite cover of $\bT^{d}$ with balls $B(x_i, \eta)$, $i=1, \ldots, m$. Let $\cA$ be the matrix defined by $\cA_{ij} = 1$ if and only $A x_i \in B(x_j, 2 \eta (1+ \norm{A}))$ and consider the shift $\Omega := \Omega_{\cA}$ associated.
	
	For all $\omega \in \Omega$, the sequence $(x_{\omega_n})_{n}$ is a $2 \eta (1+\norm{A})$-pseudo-orbit and thus can be $\beta$-shadowed by points of a set $\pi(\omega) \subset \bT^{d}$ (which is a singleton in the invertible case). 
	
	By construction, $A \circ \pi = \pi \circ \theta$. $\pi$ is surjective: every point $x \in \bT^{d}$ is an image $\pi(\omega)$ for any sequence $\omega$ such that $A^{n}x \in B(x_{\omega_n}, \eta)$ for all $n \in \bZ$. These sequences are indeed in $\Omega$, since
	\[ 
	d(A x_{\omega_n}, x_{\omega_{n+1}}) \leq d(A x_{\omega_n}, A^{n+1} x) + d(A^{n+1} x, x_{\omega_{n+1}}) \leq (\norm{A} + 1) \eta < 2 (1+ \norm{A}) \eta.
	\]
	$\pi$ is also continuous when giving $\Omega$ the product topology. The argument is similar to the one given in the previous proof. 
%	Argue by contradiction. Suppose the map is not continuous. Then there exists $\gamma > 0$ such that for all $N \geq 0$, one can find $\omega_N, \omega'_N \in \Omega$ such that $\omega_{N,k} = \omega'_{N,k}$ for all $k=-N, \ldots, N$ and $d(\pi(\omega_N), \pi(\omega'_N) \geq \gamma$. Letting $x_N := \pi(\omega_N)$ and $y_N = \pi(\omega'_N)$, this implies implies that $d(A^k x_N, A^k y_N) \leq 2 \beta$ for all $k =-N, \ldots, N$.  By compactness we can suppose $x_N \rightarrow x$ and $y_N \rightarrow y$. Thus we deduce $d(x,y) \geq \gamma$ and $d(A^k x, A^k y) \leq 2 \beta$ for all $k \in \bZ$. By expansiveness if $2 \beta < \ec$ then $x = y$, contradiction.
	
	For $i \in [m]$, let 
	\[ 
	T_i := \pi \left( \left\{ \omega \in \Omega, \omega_0 = i \right\} \right).
	\]
	By construction for all $\omega \in \Omega$ $d(\rho(\omega), x_{\omega_0}) \leq \beta$, so the sets $T_i$ have diameter at most $2 \beta$, which can be made arbitrarily small. From the continuity of $\pi$, the sets $T_i$ are closed sets. 
	
	Consider now the local product on $\Omega$ defined for $\omega, \omega' \in \Omega$ with $\omega_0 = \omega'_0$ by
	\[ 
	[\omega, \omega']_k = \left\{ \begin{array}{l l}
		\omega_k & \text{if $k \geq 0$} \\
		\omega'_k & \text{if $k \leq 0$}
	\end{array} \right.
	\]
	Then $d(A^k \pi(\omega), A^k \pi([\omega, \omega'])) \leq 2 \beta$ for all $k \geq 0$, and $d(A^k \pi(\omega), A^k \pi([\omega, \omega'])) \leq 2 \beta$ for all $k \leq 0$. Thus $\pi([\omega, \omega']) \in W^s_{2\beta}(\pi(\omega)) \cap W^u_{2\beta}(\pi(\omega'))$, so that $\pi([\omega, \omega']) = [\pi(\omega), \pi(\omega')]$ by Lemma \ref{lem:local_product}. This immediately implies that the sets $T_i$ are rectangles.
	
	Suppose then that $x = \pi(\omega)$, with $\omega_0 = i, \omega_1 = j$. From $\pi \circ \theta = A \circ \pi$ we deduce $W^{s}(x,T_i) = \pi \left( \{ \omega' \in \Omega, \forall k \geq 0,  \omega_k = \omega'_k \}\right)$. Thus 
	\[ 
	A W^{s}(x, T_i) = \pi \left( \{ \omega' \in \Omega, \forall n \geq -1 \ \omega'_{n} = \omega_{n+1} \}\right) \subset \pi \left( \{ \omega' \in \Omega, \forall n \geq 0 \  \ \omega'_{n} = \omega_{n+1} \}\right) = W^{s}(Ax,T_j),
	\]
	hence 
	\[ 
	A(W^{s}(x, T_i)) \subseteq W^{s}(Ax, T_j).
	\]
	Using $A^{-1}$ in place of $A$, we can prove similarly $A^{-1}(W^{u}(x, T_i)) \subseteq W^{u}(A^{-1}x, T_j)$. Therefore the rectangles $T_i$ satisfy almost all the properties of a Markov partition, except there may not be proper or of disjoint interiors. 
	
	To get rectangles satisfying these properties, the idea is to divide the $T_i$ further into smaller rectangles. Namely if $T_j \cap T_k \neq \emptyset$, let
	\begin{align*}
		&T^{1}_{j,k} = \left\{x \in T_j, W^{u}(x,T_j) \cap T_k \neq \emptyset, W^{s}(x,T_j) \cap T_k \neq \emptyset \right\} = T_j \cap T_k \\
		&T^{2}_{j,k} = \left\{x \in T_j, W^{u}(x,T_j) \cap T_k \neq \emptyset, W^{s}(x,T_j) \cap T_k = \emptyset \right\} \\
		&T^{3}_{j,k} = \left\{x \in T_j, W^{u}(x,T_j) \cap T_k = \emptyset, W^{s}(x,T_j) \cap T_k \neq \emptyset \right\} \\
		&T^{4}_{j,k} = \left\{x \in T_j, W^{u}(x,T_j) \cap T_k = \emptyset, W^{s}(x,T_j) \cap T_k = \emptyset \right\}
	\end{align*}
	Then for all $x,y \in T_j$, the fact that $W^{s}([x,y], T_j) = W^{s}(x, T_j)$ and $W^{u}([x,y], T_j) = W^{u}(y, T_j)$ proves that the $T^{i}_{j,k}$ are rectangles. For any $x \in \bT^{d}$ let 
	\[ 
		R(x) := \bigcap \left\{ \mathrm{int} (T^{i}_{j,k}): T_j \cap T_k \neq \emptyset \text{ and } x \in T^{i}_{j,k} \right\}
	\]
	and $\cR := \{ \overline{R(x)}, x \in \bT^{d} \}$. It can be easily checked that $R(x) \cap R(x') \neq \emptyset$ implies $R(x) = R(x')$ so the set $\cR$ is finite. Therefore $\cR$ defines a finite covering by proper rectangles of disjoint interiors. It remains to prove the second property of Markov partitions.
	
	Suppose $y \in W^{s}(x, \overline{R(x)})$. Suppose that $x \in \mathrm{int} (T_i)$ and $Ax \in \mathrm{int} (T_j)$. From the property proved previously for the $T_i$, $Ay \in W^{s}(Ax, T_j)$. We will argue by contradiction to prove that $R(Ax) = R(Ay)$. If this does not hold, it implies the existence of a rectangle $T_k$ intersecting $T_j$ such that $Ax, Ay$ are not in the same $T^{\cdot}_{j,k}$. From the definition of $T_{j,k}^{\cdot}$ we can suppose without loss of generality that $W^{u}(Ax, T_j) \cap T_k \neq \emptyset$ and $W^{u}(Ay, T_j) \cap T_k = \emptyset$. Since $A W^{u}(x, T_i) \supset W^{u}(Ax, T_j)$, we can thus find $z \in W^{u}(x, T_i)$ such that $Az \in T_j \cap T_k = T^{1}_{j,k}$. Consider $l$ such that $z \in T_{i} \cap T_l = T^{1}_{i,l}$. Since $R(x) = R(y)$, $x,y$ are in the same subrectangle $T^{\cdot}_{i,l}$, so there exists a point $z' \in W^{u}(y, T_i) \cap T_l$. Then $z'' := [z,z'] = [z,y]$ is in $W^{s}(z,T_l) \cap W^{u}(y,T_i)$ and $Az'' = [Az, Ay] \in W^{s}(Az, T_k) \cap W^{u}(Ay, T_j)$, which contradicts $W^{u}(Ay, T_j) \cap T_k = \emptyset$. Hence $R(Ax) = R(Ay)$ which implies $A W^{s}(x,\overline{R(x)}) \subset W^{s}(Ax, \overline{R(Ax)})$.

\end{proof}

\subsection*{Acknowledgements}
We thank Ioannis Iakovoglou for useful discussions. We also thank the anonymous referees for pointing out a gap in a proof in the first version of this manuscript and for reference \cite{klyachko2020random}.

\bibliographystyle{plainurl}
\bibliography{accelerating_hyperbolic.bib}

\end{document}